\colorlet{mdtRed}{red!50!black}
\definecolor{dblue}{rgb}{0,0,.6}
\numberwithin{equation}{section}
\newtheorem{theorem}[equation]{Theorem}
\newtheorem{corollary}[equation]{Corollary}
\newtheorem{lemma}[equation]{Lemma}
\newtheorem{proposition}[equation]{Proposition}
\newtheorem*{theorem*}{Theorem}
\newtheorem*{corollary*}{Corollary}
\newtheorem*{proposition*}{Proposition}
\theoremstyle{remark}
\newtheorem{remark}[equation]{Remark}
\newcommand{\C}{\mathbb{C}}
\newcommand{\e}{\epsilon}
\newcommand{\bq}{\overline q}
\newcommand{\mf}[1]{\mathfrak{#1}}
\newcommand{\ms}[1]{\mathscr{#1}}
\newcommand{\mb}[1]{\mathbb{#1}}
\newcommand{\mc}[1]{\mathcal{#1}}
\renewcommand{\t}[1]{\widetilde{#1}}
\begin{document}

\title[Infinitesimal deformations of some quot schemes, II]
{Infinitesimal deformations of some quot schemes, II} 

\author[I. Biswas]{Indranil Biswas} 
	
\address{Department of Mathematics, Shiv Nadar University, NH91, Tehsil
Dadri, Greater Noida, Uttar Pradesh 201314, India}

\email{indranil.biswas@snu.edu.in, indranil29@gmail.com}

\author[C. Gangopadhyay]{Chandranandan Gangopadhyay} 

\address{Department of Mathematics, Indian Institute of Science Education and Research, Pune, 
411008, Maharashtra, India}

\email{chandranandan@iiserpune.ac.in}

\author[R. Sebastian]{Ronnie Sebastian} 

\address{Department of Mathematics, Indian Institute of Technology Bombay, Powai, Mumbai 
400076, Maharashtra, India}

\email{ronnie@iitb.ac.in} 

\subjclass[2010]{14J60, 32G05, 14J50, 14D15}

\keywords{Quot scheme, infinitesimal deformation, Hilbert-Chow morphism}

\begin{abstract}
Let $C$ be an irreducible smooth complex projective curve of genus $g$, with $g_C\,
\geqslant\, 2$. Let $E$ be a vector bundle on $C$ of rank $r$, with $r\,\geqslant
\,2$. Let $\mc Q\,:=\,\mc Q(E,\,d)$ be the 
Quot Scheme parameterizing torsion quotients of $E$ of degree $d$. 
We explicitly describe all deformations of $\mc Q$. 
\end{abstract}

\maketitle

\tableofcontents

\section{Introduction}

Let $C$ be an irreducible smooth projective curve over $\mb C$ of genus $g_C$, with
$g_C\, \geqslant \,2$. 
Let $E$ be a vector bundle on $C$ of rank $r\, \geqslant\, 1$. 
Fix an integer $d\,\geqslant\, 1$. Let $\mc Q\,:=\,\mc Q(E,\,d)$ be the 
Quot Scheme parameterizing torsion quotients of 
$E$ of degree $d$. It is known that $\mc Q$ is a 
smooth projective variety of dimension $rd$. This Quot scheme has 
been studied by various authors, 
see \cite{BGL}, \cite{BDW}, \cite{BFP},
\cite{BR}, \cite{OP}, \cite{motive-nested-quot-torsion}, \cite{nested-quot-torsion}.

The group of holomorphic automorphisms $\text{Aut}(\mc Q)$ of $\mc Q$ is a complex Lie group
whose Lie algebra is $H^0(\mc Q,\,T_{\mc Q})$ \cite[Lemma 1.2.6]{Sern}.
It is known that 
$$H^0(\mc Q({\mathcal O}^{\oplus r}_C,\,d),\,
T_{\mc Q({\mathcal O}^{\oplus r}_C,\,d)})\,=\, \mf{sl}(r, {\mathbb C})\,=\,
H^0(X,\, {\rm End}({\mathcal O}^{\oplus r}_C))/{\mathbb C}
$$
for all $r\, \geqslant\, 2$ \cite{BDH-aut}. From this it follows that the maximal
connected subgroup of $\text{Aut}(\mc Q({\mathcal O}^{\oplus r}_C,d))$ is
${\rm PGL}(r,{\mathbb C})\,=\, \text{Aut}({\mathcal O}^{\oplus r}_C)/{\mathbb C}^*$,
where $\text{Aut}({\mathcal O}^{\oplus r}_C)\,=\, {\rm GL}(r,{\mathbb C})$ is the
group of holomorphic automorphisms of ${\mathcal O}^{\oplus r}_C$.
More generally, if either $E$ is semistable or $r\,\geqslant\, 3$,
then
$$
H^0(\mc Q(E,\,d),\, T_{\mc Q(E,d)})\,=\, H^0(X,\, \text{End}(E))/{\mathbb C}
$$
\cite{G19}, and hence the maximal connected subgroup of $\text{Aut}(\mc Q(E,\,d))$
is $\text{Aut}(E)/{\mathbb C}^*$, where $\text{Aut}(E)$ is the 
group of holomorphic automorphisms of $E$.

Regarding the next cohomology $H^1(\mc Q,\,T_{\mc Q})$, first consider the
case of $r\,=\,1$. In this case, the Quot scheme
$\mc Q(E,\,d)$ is identified with the $d$-th symmetric product 
$C^{(d)}$ of $C$. The infinitesimal deformation space
$H^1(C^{(d)},\, T_{C^{(d)}})$ for $C^{(d)}$ was computed in \cite{Kempf} under
the assumption that $C$ is 
non-hyperelliptic, and it was computed in \cite{Fantechi} when 
$g\,\geqslant\, 3$. In \cite{Laz-cohomology-symm}, the spaces $H^i(C^{(d)},\, T_{C^{(d)}})$
are described for some values of $i$. Remarkably, a complete description of these 
cohomology groups is unknown, to the best of our knowledge. 

In contrast to the above situation, when $r\,\geqslant\, 2$, a complete description of
the cohomology groups
$H^i(\mc Q,\,T_{\mc Q})$ is obtained in \cite[Theorem 9.10]{BGS-deformations-1}. 

When $g_C\,\geqslant\, 2$, in \cite[Theorem 9.11(2)]{BGS-deformations-1}
it is proved that the vector space 
$H^1(\mc Q,\,T_{\mc Q})$ is an extension of $H^1(C,\, T_C)$ by
$H^1(\Sigma,\,\overline{q}_1^*ad(E))$ (see \eqref{f13} and \eqref{bar-q_1} 
for the definition of $\Sigma$ and $\overline{q}_1$). The elements of
$H^1(\mc Q,\,T_{\mc Q})$ correspond to the first order deformations of the variety $\mc Q$. 
Our aim here is to construct explicitly the deformations of $\mc Q$. This is
carried out using the deformations of the curve $C$ and the deformations of 
$\overline{q}_1^*E$. We show that the images of these deformations of $\mc Q$ 
in $H^1(\mc Q,\,T_Q)$ encompass the entire space $H^1(\mc Q,\,T_Q)$. 
We briefly sketch how this is achieved. 

As a motivating case, first recall 
that when $d\,=\,1$, then $\mc Q(E,\,1)\,\cong \,\mb P(E)$. 
Now note that the projective bundle can also be thought 
of as a \emph{relative} Quot scheme associated to the identity 
morphism $C\,\longrightarrow\, C$ and the vector bundle $E$ together with the
constant Hilbert polynomial $1$, that is, $\mb P(E)\,=\,{\rm Quot}_{C/C}(E,\,1)$. 
Analogously, for $d\,\geqslant\, 2$, given a
deformation of the curve $C$ and a deformation of 
$\overline{q}_1^*E$, 
we consider a certain \emph{relative} 
Quot scheme and construct a first order deformation of this relative 
Quot scheme. This in turn naturally induces first order 
deformation of a very large open subset of $\mc Q$, 
and hence of $\mc Q$ itself. Using \cite[Theorem 9.11(2)]{BGS-deformations-1}
we show that all first order deformations of 
$\mc Q$ arise in this manner. This gives an explicit 
description of all the first order deformations of the variety $\mc Q$.

\section{Recollection of some results}

\subsection{Notation}\label{se9.1}

Let $C$ be an irreducible smooth projective curve defined over $\mathbb C$.
Let $E$ be a vector bundle on $C$.
The $d$-th symmetric product of $C$ is denoted by $C^{(d)}$. 
Let $\mc Q$ denote the Quot scheme parametrizing torsion quotients of $E$
of length $d$. 
Let 
\begin{align}
	p_1\,:\,C\times \mc Q\,\longrightarrow\, C,\ \ p_2\,:\,
	C\times \mc Q\,\longrightarrow\, \mc Q,\label{t5}\\
	q_1\,:\, C\times C^{(d)}\,\longrightarrow\, C,\ \ 
	q_2\,:\,C\times C^{(d)}\,\longrightarrow\, C^{(d)}\label{t6}
\end{align}
denote the natural projections.
Recall that there is a universal exact sequence of coherent sheaves on $C\times \mc Q$
\begin{equation}\label{universal-seq-C times Q}
	0 \,\longrightarrow\, \mc A\,\longrightarrow\, p_1^*E\,\longrightarrow\, \mc B
	\,\longrightarrow\, 0\,.
\end{equation}

Let
\begin{equation}\label{f13}
	\Sigma\,\subset\, C\times C^{(d)}
\end{equation}
be the universal divisor; so $\Sigma\,=\, \{(x,\, \alpha)\, \in\,
C\times C^{(d)}\,\, \big\vert\,\, x\, \in\, \alpha\}$. For ease of notation, let
\begin{equation}\label{bar-q_1}
	\bq_1\,\, :\,\, \Sigma\,\longrightarrow\,C
\end{equation}
be the composition of maps
$\Sigma\,\hookrightarrow\, C\times C^{(d)}\,\stackrel{q_1}{\longrightarrow}\,C$,
where $q_1$ is the map in \eqref{t6}. Note that the fibers of $\bq_1$ are identified
with $C^{(d-1)}$. Let 
\begin{equation}\label{hc}
\phi\,\,:\,\,\mc Q\,\longrightarrow\, C^{(d)}.
\end{equation}
denote the Hilbert-Chow morphism; for the definition, see for example \cite[equation (2.3)]{GS}.

Define
\begin{equation}\label{t7}
	\Phi\,\,:=\,\,{\rm Id}_C\times \phi\,\,:\,\,C\times \mc Q\,\,\longrightarrow\,\, C\times C^{(d)}\,.
\end{equation}
Let
\begin{equation}\label{f16}
	\mc Z \,\, \subset\, \,C\times \mc Q
\end{equation}
be the zero scheme of the inclusion map 
${\rm det}(\mc A)\,\hookrightarrow\, \det (p_1^*E)$, 
where $p_1$ is the map in \eqref{t5}. 
The fact that the determinant map is an inclusion
is a consequence of the following elementary fact. 
Let $R$ be a domain and let $R^{\oplus s}\,\longrightarrow\, R^{\oplus s}$
be an inclusion of free $R$-modules. Then the determinant
of this morphism is nonzero. 
From the definition of $\phi$ it follows 
immediately that $\Phi^*\Sigma\,=\,\mc Z$. 
In fact, $\mc Z$ sits in the following commutative diagram,
in which the middle and right squares are Cartesian,
the curved arrows are the composition of maps and $\overline{p}_1$ is defined
so that the left square commutes:
\begin{equation}\label{dpi}
	\begin{tikzcd}
		C\ar[d,equal] & \mc Z\ar[r]\ar[d]\arrow[rr, bend left=15, "\overline{p}_2", labels=above right]
		\arrow[l, "\overline{p}_1", labels=above] & 
		C\times \mc Q\ar[r, "p_2", labels=below]\ar[d, "\Phi"] & \mc Q\ar[d, "\phi"] \\
		C & \Sigma \ar[r]\arrow[rr, bend right=15, "\overline{q}_2", labels=below right]
		\arrow[l, "\overline{q}_1", labels=above] & 
		C\times C^{(d)}\ar[r, "q_2", labels=above] & C^{(d)}
	\end{tikzcd}
\end{equation}
The above map $\overline{q}_2$ is a finite morphism. Thus, it follows that the 
map $\overline{p}_2$, being the Cartesian product of $\phi$ and 
$\overline{q}_2$, is also a finite map. 

\subsection{The results}
In this section we recall the results related to the computation of cohomology of the tangent bundle $T_{\mc Q}$.
The following proposition helps us relate the cohomology of the tangent bundle with cohomology of the sheaf $\ms Hom(\mc A, \, \mc B)$ on $C\times \mc Q$. 

\begin{proposition}[{\cite[Proposition 9.8]{BGS-deformations-1}}]\label{T_Q}
	The tangent bundle of $\mc Q$ is $$T_{\mc Q}
	\,\,\cong\,\, p_{2*}(\ms Hom(\mc A,\,\mc B)),$$
	where $p_2$ is the projection in \eqref{t5}.
\end{proposition}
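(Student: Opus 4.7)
The plan is to prove the proposition in three steps: pointwise identification of the tangent space, construction of a natural sheaf map $T_{\mc Q}\,\to\, p_{2*}\ms Hom(\mc A,\,\mc B)$ via the universal property of the Quot scheme, and verification that it is a global isomorphism using cohomology and base change.

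First, I would recall the classical pointwise computation. At a point $[q\,:\,E\twoheadrightarrow B_q]\,\in\,\mc Q$ with kernel $A_q\,\subset\, E$, Grothendieck's deformation theory for the Quot functor gives a natural identification $T_{[q]}\mc Q\,\cong\,\text{Hom}_C(A_q,\,B_q)$: a tangent vector is a $\mb C$-morphism $\text{Spec}\,\mb C[\e]/(\e^2)\,\to\,\mc Q$ centered at $[q]$, which corresponds via the universal property to a flat extension of $q$ over $\mb C[\e]/(\e^2)$, and whose ``first-order part'' is a well-defined element of $\text{Hom}_C(A_q,\,B_q)$.

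Second, I would construct the sheaf map globally. Given an open $U\,\subset\,\mc Q$, sections of $T_{\mc Q}|_U$ correspond to $\mc Q$-morphisms $U\times_{\mb C}\text{Spec}\,\mb C[\e]/(\e^2)\,\to\, \mc Q$ that restrict to the inclusion $U\,\hookrightarrow\,\mc Q$ modulo $\e$. By the universal property of $\mc Q$, such morphisms classify flat families on $C\times U\times \text{Spec}\,\mb C[\e]/(\e^2)$ extending the restriction of the universal quotient to $C\times U$. The same first-order analysis as in the pointwise case shows that these extensions are classified by $\text{Hom}_{C\times U}(\mc A|_{C\times U},\,\mc B|_{C\times U})\,=\,\Gamma(U,\,p_{2*}\ms Hom(\mc A,\,\mc B))$. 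The bijection is natural in $U$, yielding a morphism of sheaves $T_{\mc Q}\,\to\, p_{2*}\ms Hom(\mc A,\,\mc B)$.

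Third, I would verify it is an isomorphism. For each $q\,\in\,\mc Q$, since $C$ is a smooth curve and $B_q$ is torsion of length $d$, $A_q\,\subset\,E$ is locally free of rank $r$; consequently $\ms Hom_C(A_q,\,B_q)\,\cong\, A_q^{\vee}\otimes B_q$ is a torsion sheaf on $C$, so $H^1(C,\,\ms Hom_C(A_q,\,B_q))\,=\,0$. Grauert's theorem on cohomology and base change then implies that $p_{2*}\ms Hom(\mc A,\,\mc B)$ is locally free with fiber at $[q]$ canonically $\text{Hom}_C(A_q,\,B_q)$, and of rank $rd\,=\,\dim \mc Q$. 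Since the map constructed in the second step is an isomorphism on fibers by the first step, it is an isomorphism of locally free sheaves of the same rank. The main obstacle is the second step: promoting the pointwise deformation-theoretic identification to a bona fide morphism of sheaves requires carefully tracking how the universal property of $\mc Q$ interacts with the $\e$-thickening over open subsets, and checking compatibility under restriction. Once this bookkeeping is in place, the remaining cohomology and base change argument is routine because the fiberwise $H^1$ vanishes automatically on a curve for torsion-valued $\ms Hom$ sheaves.
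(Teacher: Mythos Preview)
Your argument is correct and is the standard proof of this fact: identify tangent vectors with first-order deformations of the universal quotient, globalize via the universal property to get a sheaf map, and conclude by cohomology and base change using the vanishing of $H^1$ for torsion sheaves on a curve. Note, however, that the present paper does not itself prove this proposition; it is quoted without proof from \cite[Proposition~9.8]{BGS-deformations-1} as part of the ``Recollection of some results'' section, so there is no in-paper proof to compare against. Your write-up matches the standard argument one would expect in the cited reference.
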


To compute the cohomology of the sheaf $\ms Hom(\mc A, \, \mc B)$, we compute its direct images under the map $\Phi$ and use the Leray spectral sequence. 
In order to compute the direct images of $\ms Hom(\mc A,\, \mc B)$ under $\Phi$, we need to compute direct images of various other sheaves on $C\times \mc Q$.
We briefly recall some of these results 
from \cite{BGS-deformations-1} that will be needed. 

\begin{corollary}[{\cite[Corollary 9.1]{BGS-deformations-1}}]\label{cor-direct image of structure sheaf}
	The following statements hold:
	\begin{enumerate}
		\item $\Phi_*\mc O_{C\times \mc Q}\,=\, \mc O_{C\times C^{(d)}}$\ and\ 
		$R^i\Phi_*\mc O_{C\times \mc Q}\,=\,0\,\ \ \text{ for\ all }\ i\,>\,0$.
		
		\item $\phi_*\mc O_{\mc Q}\,=\,\mc O_{C^{(d)}}$\ and\ $R^i\phi_*\mc O_{\mc Q}
		\,=\,0\,\ \ \text{ for\ all }\ i\,>\,0$.
	\end{enumerate}
\end{corollary}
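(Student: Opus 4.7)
The plan is to deduce part (1) from part (2) by flat base change, and then to establish (2) by reducing it, through cohomology and base change, to a computation on the fibres of the Hilbert--Chow morphism $\phi$.

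For the reduction, observe that the right-hand square of diagram~\eqref{dpi} realizes $C\times\mc Q$ as the fibre product $(C\times C^{(d)})\times_{C^{(d)}}\mc Q$, and the projection $q_2\colon C\times C^{(d)}\to C^{(d)}$ is flat. Flat base change then yields
\[
R^i\Phi_*\mc O_{C\times\mc Q}\,\cong\, q_2^*\bigl(R^i\phi_*\mc O_{\mc Q}\bigr)
\]
for every $i\geq 0$, so part (2) directly implies part (1).

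To prove (2), I would use that $\phi$ is proper and $C^{(d)}$ is smooth (in particular normal), so that cohomology and base change reduces the claim to showing that, for each closed point $s\in C^{(d)}$, the scheme-theoretic fibre $\phi^{-1}(s)$ is connected and satisfies $H^i(\phi^{-1}(s),\,\mc O)=0$ for $i>0$. Indeed, the vanishing forces $R^i\phi_*\mc O_{\mc Q}=0$ for $i>0$, and then $\phi_*\mc O_{\mc Q}$ is a finite torsion-free sheaf of algebras of generic rank $1$ on the normal variety $C^{(d)}$ into which $\mc O_{C^{(d)}}$ embeds; integral closure then forces the embedding to be an isomorphism. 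For a point $s=\sum_\alpha n_\alpha x_\alpha$ with distinct $x_\alpha$, the fibre $\phi^{-1}(s)$ is isomorphic to the product $\prod_\alpha \mc Q_{x_\alpha}(E,\,n_\alpha)$ of punctual Quot schemes parametrizing length-$n_\alpha$ quotients of $E$ supported at $x_\alpha$; since $E$ is locally free, each factor depends up to isomorphism only on $r$ and $n_\alpha$. I would then exhibit a $\mb C^*$-action on $\mc Q_{x_\alpha}(E,\,n_\alpha)$ coming from scaling a local uniformizer at $x_\alpha$, whose fixed locus is finite (indexed by $r$-tuples of partitions of integers summing to $n_\alpha$), and invoke the associated Bialynicki--Birula decomposition to stratify the punctual Quot scheme by affine cells. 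This simultaneously gives connectedness and the vanishing $H^{>0}(\mc O)=0$ for the punctual Quot scheme, and the Künneth formula transfers these conclusions to $\phi^{-1}(s)$.

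The main obstacle is the cohomological input for the punctual Quot scheme. Cohomology and base change needs information about the \emph{scheme-theoretic} fibres of $\phi$, not only their reductions, so one must verify that the Bialynicki--Birula stratification above is scheme-theoretic (equivalently, that the punctual Quot scheme is reduced), or else replace the reduced-fibre input by a corresponding statement on formal thickenings via the theorem on formal functions. Once this point is settled, the remainder of the argument is formal.
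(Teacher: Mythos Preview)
This paper does not prove the corollary; it is quoted from \cite{BGS-deformations-1}, so there is no in-paper argument to compare your sketch against. I can only assess it on its merits.

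Your reduction of (1) to (2) via flat base change along $q_2$ is fine, and the cohomology-and-base-change strategy for (2) is reasonable---though you should say why $\phi$ is flat (miracle flatness, once the fibres are known to be equidimensional of dimension $d(r-1)$), since without flatness the base-change step does not apply.

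The fibrewise argument, however, has two genuine problems. First, the $\C^*$-action obtained by scaling a uniformizer does \emph{not} have finite fixed locus: a graded $k[t]$-submodule $A\subset k[t]^{\oplus r}$ of colength $n$ is the same datum as a chain of subspaces $V_0\subset V_1\subset\cdots\subset k^r$ with $\sum_j\operatorname{codim} V_j=n$, so the fixed locus is a disjoint union of partial flag varieties, not a finite set. To isolate the fixed points you would also need the $(\C^*)^r$ acting through a local frame of $E$.

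Second, and more seriously, the punctual Quot scheme is \emph{not smooth}, so the Bia{\l}ynicki--Birula theorem does not furnish an affine-cell decomposition. Already for $r=d=2$, at the quotient $q\colon E\twoheadrightarrow E\vert_x$ one checks that $d\phi_q\colon\operatorname{Hom}(A,B)\cong M_2(\C)\to T_{2x}C^{(2)}$ factors through the trace and hence has rank $1$; the scheme-theoretic fibre $\phi^{-1}(2x)$ therefore has a $3$-dimensional Zariski tangent space at a point of a $2$-dimensional variety. Even after enlarging the torus and using that the attracting loci of the singular fibre are closed in the BB cells of the ambient smooth $\mc Q$, you obtain only an \emph{affine paving}, which controls Borel--Moore homology and Chow groups but does not by itself force $H^{>0}(\phi^{-1}(s),\mc O)=0$. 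This is the real obstacle, not the reducedness issue you flag (the fibres are in fact reduced: a flat map between smooth varieties has Cohen--Macaulay fibres, and generic reducedness then gives reducedness).
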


\begin{corollary}[{\cite[Corollary 9.3]{BGS-deformations-1}}]\label{Phi_*B}
	The natural map 
	$$q_1^*E\big\vert_\Sigma\,\longrightarrow\, \Phi_*\mc B$$
	is an isomorphism, where $\Sigma$ is defined in \eqref{f13}
	and $q_1$ (respectively, $\Phi$) is the map in \eqref{t6}
	(respectively, \eqref{t7}). Moreover, $$R^i\Phi_*\mc B\,=\,0$$
	for all $i\,>\,0$.
\end{corollary}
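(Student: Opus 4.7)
The plan is to apply $\Phi_*$ to the universal sequence \eqref{universal-seq-C times Q}, reduce to understanding the higher direct images of the rank-$r$ subsheaf $\mc A$, and then pin down both the image of $\Phi_*\mc A$ in $q_1^*E$ and the vanishing of $R^i\Phi_*\mc A$ by a fiber-by-fiber analysis.

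First I would take the long exact sequence obtained by applying $\Phi_*$ to \eqref{universal-seq-C times Q}. Since $p_1=q_1\circ\Phi$, the projection formula together with Corollary \ref{cor-direct image of structure sheaf} yields $\Phi_*(p_1^*E)\cong q_1^*E\otimes\Phi_*\mc O_{C\times\mc Q}\cong q_1^*E$ and $R^i\Phi_*(p_1^*E)\cong q_1^*E\otimes R^i\Phi_*\mc O_{C\times\mc Q}=0$ for $i>0$. Consequently the long exact sequence collapses to
\[
0\longrightarrow \Phi_*\mc A\longrightarrow q_1^*E\longrightarrow \Phi_*\mc B\longrightarrow R^1\Phi_*\mc A\longrightarrow 0
\]
together with isomorphisms $R^i\Phi_*\mc B\cong R^{i+1}\Phi_*\mc A$ for $i\geqslant 1$. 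Because $\mc B$ is set-theoretically supported on $\mc Z=\Phi^{-1}\Sigma$, the sheaf $\Phi_*\mc B$ is supported on $\Sigma$, so the induced map $q_1^*E\to\Phi_*\mc B$ factors through $q_1^*E|_\Sigma$; this is the natural map in the statement.

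Thus the entire proposition reduces to showing that $\Phi_*\mc A$ equals $q_1^*E\otimes I_\Sigma$ inside $q_1^*E$, and that $R^i\Phi_*\mc A=0$ for all $i\geqslant 1$. For the first assertion, I would check that the inclusion $\Phi_*\mc A\hookrightarrow q_1^*E$ factors through $q_1^*E\otimes I_\Sigma$ using that the composite $\mc A\hookrightarrow p_1^*E\to\Phi^*(q_1^*E/q_1^*E\otimes I_\Sigma)=p_1^*E|_{\mc Z}$ vanishes (the composite $\mc A\to p_1^*E\to p_1^*E|_{\mc Z}$ factors through $\mc A|_{\mc Z}\to p_1^*E|_{\mc Z}$, which is zero on stalks along $\mc Z$ by the definition of $\mc Z$ as the vanishing locus of $\det\mc A\to\det p_1^*E$ combined with generic rank considerations). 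Comparing ranks and using that $q_1^*E/\Phi_*\mc A$ is supported on $\Sigma$ then yields equality.

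For the vanishing of the higher $R^i\Phi_*\mc A$ I would use cohomology and base change. The fiber of $\Phi$ over $(x,\alpha)\in C\times C^{(d)}$ is $\{x\}\times\phi^{-1}(\alpha)$, a fiber of the Hilbert-Chow morphism, and one has to show that $H^i(\phi^{-1}(\alpha),\mc A|_{\{x\}\times\phi^{-1}(\alpha)})=0$ for $i\geqslant 1$, uniformly on the base so that base change applies. This is the main obstacle: the fibers of $\phi$ are not equidimensional (they depend on the partition type of $\alpha$), and $\mc A$ is not a pullback from $C^{(d)}$. My approach would be to stratify $C^{(d)}$ by partition type and argue by induction on $d$, using the natural addition maps $C\times C^{(d-1)}\to C^{(d)}$ to reduce the cohomology computation on a fiber of $\phi$ to already-known vanishing statements, together with the fact that on the open locus of reduced divisors the fibers are products $\prod\mb P(E_{x_i})$ where the vanishing is standard. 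Once $R^i\Phi_*\mc A=0$ is established for $i\geqslant 1$, the displayed four-term sequence gives $\Phi_*\mc B\cong q_1^*E|_\Sigma$ and $R^i\Phi_*\mc B=0$ for all $i>0$, completing the proof.
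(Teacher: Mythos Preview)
The present paper does not actually prove this corollary; it is quoted verbatim from \cite[Corollary~9.3]{BGS-deformations-1} without argument, so there is no in-paper proof to compare against. I can, however, comment on the correctness of your proposal.

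Your reduction via the long exact sequence for $\Phi_*$ applied to \eqref{universal-seq-C times Q} is fine, as is the use of the projection formula together with Corollary~\ref{cor-direct image of structure sheaf} to identify $R^i\Phi_*(p_1^*E)$. The genuine gap is in your argument that $\Phi_*\mc A\hookrightarrow q_1^*E$ factors through $q_1^*E\otimes I_\Sigma$. You claim that the composite $\mc A\to p_1^*E\to p_1^*E|_{\mc Z}$ vanishes, on the grounds that $\mc Z$ is the zero locus of $\det\mc A\to\det p_1^*E$. This is false for $r\geqslant 2$: vanishing of the determinant only says the map drops rank on $\mc Z$, not that it is zero. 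Concretely, at a generic point $(x,q)\in\mc Z$ with $q$ a length-one quotient supported at $x$, the induced map on fibers $\mc A|_{(x,q)}\to E_x$ has image $\ker(E_x\to\mc B_{q,x})$, which is $(r-1)$-dimensional. Equivalently, if one had $\mc A\subset p_1^*E(-\mc Z)$, then taking determinants would force $\det(p_1^*E)(-\mc Z)=\det\mc A\subset\det(p_1^*E)(-r\mc Z)$, i.e.\ $(r-1)\mc Z\leqslant 0$, a contradiction. So the desired factorization cannot be obtained this way, and the identification $\Phi_*\mc A=q_1^*E(-\Sigma)$ (while true) is left unproved.

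The second half of the proposal, the vanishing of $R^i\Phi_*\mc A$ for $i\geqslant 1$, is only a program (``stratify by partition type and induct on $d$''), not an argument. Since this is where the actual cohomological content resides, and since you yourself flag the non-equidimensionality of the fibers as the main obstacle, the proposal as it stands does not establish the corollary.
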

The following theorem reduces the computation of higher direct images of $\ms Hom(\mc A,\, \mc B)$ to the vector bundle $\ms End(\mc A)$. 
\begin{theorem}[{\cite[Theorem 9.5]{BGS-deformations-1}}]\label{preliminiary ses}
	There is a map $\Xi$ that fits in a short exact sequence
	\begin{equation*}
		0\,\longrightarrow\, ad(q_1^*E\big\vert_{\Sigma})\,
		\stackrel{\Xi}{\longrightarrow}\, \Phi_*\ms Hom(\mc A,\,\mc B)
		\,\longrightarrow\, R^1\Phi_*\ms End(\mc A)\, \longrightarrow\, 0
	\end{equation*}
	on $\Sigma$.
	For every $i\,\geqslant\, 1$, there is a natural isomorphism
	\begin{equation*}
		R^i\Phi_*\ms Hom(\mc A,\,\mc B)\,\,\stackrel{\sim}{\longrightarrow}\,\,
		R^{i+1}\Phi_*\ms End(\mc A).
	\end{equation*}
\end{theorem}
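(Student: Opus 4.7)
The plan is to derive both assertions from a single long-exact-sequence computation. Since $\mc A$ is locally free, applying $\ms Hom(\mc A,-)$ to the universal sequence \eqref{universal-seq-C times Q} is exact, giving on $C\times\mc Q$
$$0 \,\longrightarrow\, \ms End(\mc A) \,\longrightarrow\, \ms Hom(\mc A, p_1^*E) \,\longrightarrow\, \ms Hom(\mc A, \mc B) \,\longrightarrow\, 0.$$
Using $p_1 \,=\, q_1\circ\Phi$ and the projection formula, $R^i\Phi_*\ms Hom(\mc A, p_1^*E) \,\cong\, (R^i\Phi_*\mc A^\vee) \otimes q_1^*E$, so the whole problem reduces to understanding $R^i\Phi_*\mc A^\vee$.

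To that end I would dualize the universal sequence. Since $\mc A$ and $p_1^*E$ are locally free (so carry no higher $\ms Ext$), and $\ms Hom(\mc B,\mc O) \,=\, 0$ because $\mc B$ is torsion on the integral scheme $C\times\mc Q$, one obtains
$$0 \,\longrightarrow\, p_1^*E^\vee \,\longrightarrow\, \mc A^\vee \,\longrightarrow\, \ms Ext^1(\mc B,\mc O_{C\times\mc Q}) \,\longrightarrow\, 0.$$
The projection formula together with Corollary~\ref{cor-direct image of structure sheaf} yields $\Phi_*p_1^*E^\vee \,=\, q_1^*E^\vee$ and $R^{i\geqslant 1}\Phi_*p_1^*E^\vee \,=\, 0$. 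The third term is set-theoretically supported on $\mc Z$, and $\Phi\vert_{\mc Z}$ factors through the finite map $\overline{p}_2:\mc Z\to\Sigma$ of \eqref{dpi}, so all its higher direct images vanish. Combining, $R^{i\geqslant 1}\Phi_*\mc A^\vee \,=\, 0$, hence $R^{i\geqslant 1}\Phi_*\ms Hom(\mc A, p_1^*E) \,=\, 0$.

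Feeding this vanishing into the long exact sequence of $R^\bullet\Phi_*$ applied to the first short exact sequence immediately produces the isomorphisms $R^i\Phi_*\ms Hom(\mc A,\mc B) \,\cong\, R^{i+1}\Phi_*\ms End(\mc A)$ for all $i \geqslant 1$ (the second assertion), together with the four-term exact sequence
$$0 \,\longrightarrow\, \Phi_*\ms End(\mc A) \,\longrightarrow\, \Phi_*\ms Hom(\mc A, p_1^*E) \,\longrightarrow\, \Phi_*\ms Hom(\mc A, \mc B) \,\longrightarrow\, R^1\Phi_*\ms End(\mc A) \,\longrightarrow\, 0.$$
Off $\mc Z$ the first sequence is tautologically an isomorphism ($\mc A\,=\,p_1^*E$ and $\mc B\,=\,0$ there), so the cokernel of the first arrow above is supported on $\Sigma$.

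The last step is to identify this cokernel with $ad(q_1^*E\vert_\Sigma)$. Tensoring the pushforward of the dualized sequence with $q_1^*E$ presents $\Phi_*\ms Hom(\mc A, p_1^*E)$ as an extension of a sheaf supported on $\Sigma$ by $\ms End(q_1^*E)$, and one verifies that over $\Sigma$ the image of $\Phi_*\ms End(\mc A)$ coincides with the scalar component carved out by the trace $\ms End(q_1^*E)\vert_\Sigma \,\to\, \mc O_\Sigma$. The quotient is therefore $\ker(\text{tr})\vert_\Sigma \,=\, ad(q_1^*E\vert_\Sigma)$, and the induced monomorphism into $\Phi_*\ms Hom(\mc A,\mc B)$ is the desired map $\Xi$. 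The main obstacle is this final identification: one must check by a local computation on $\mc Z$ that the pairing between $\Phi_*\ms End(\mc A)$ and the trace on $\ms End(q_1^*E)\vert_\Sigma$ aligns correctly. The vanishings preceding it are essentially formal once the finiteness of $\overline{p}_2$ is invoked.
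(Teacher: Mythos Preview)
Your strategy via the sequence $0 \to \ms End(\mc A) \to \ms Hom(\mc A, p_1^*E) \to \ms Hom(\mc A, \mc B) \to 0$ is natural, but the key vanishing step contains a genuine error. You claim that $\ms Ext^1(\mc B,\mc O)$ has no higher $\Phi$-direct images because ``$\Phi\vert_{\mc Z}$ factors through the finite map $\overline{p}_2:\mc Z\to\Sigma$''. In the paper's notation, however, $\overline{p}_2$ is the map $\mc Z\to\mc Q$, not $\mc Z\to\Sigma$; and the actual restriction $\Phi\vert_{\mc Z}:\mc Z\to\Sigma$ is \emph{not} finite. Its fibre over a point $(c,D)\in\Sigma$ is $\{c\}\times\phi^{-1}(D)$, the full fibre of the Hilbert--Chow morphism, which for reduced $D$ is $(\mb P^{r-1})^d$ and has dimension $(r-1)d\geqslant 1$. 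So finiteness cannot be invoked, and the vanishing $R^{i\geqslant 1}\Phi_*\ms Hom(\mc A,p_1^*E)=0$ is left unjustified. It may well be true, but establishing it would require an argument of the same order of difficulty as the one you are trying to bypass (for instance a relative-duality comparison with the vanishing $R^{i>0}\Phi_*\mc B=0$ of Corollary~\ref{Phi_*B}).

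The argument recorded here from \cite{BGS-deformations-1} takes a different route: one applies $\ms Hom(-,\mc B)$ rather than $\ms Hom(\mc A,-)$ to the universal sequence, producing the bottom row of diagram~\eqref{preliminary ses e2}. The terms $\ms Hom(\mc B,\mc B)$ and $\ms Hom(p_1^*E,\mc B)$ are then automatically supported on $\mc Z$ and compare directly with the trace sequence $0\to\mc O_{\mc Z}\to\ms End(p_1^*E\vert_{\mc Z})\to ad(p_1^*E\vert_{\mc Z})\to 0$. Pushing forward and using Corollary~\ref{Phi_*B} (which controls $R^i\Phi_*\mc B$ rather than $R^i\Phi_*\mc A^\vee$) yields diagram~\eqref{preliminary ses e3}, in which $\Xi$ appears tautologically as the right-hand vertical arrow. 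This both avoids the finiteness issue above and makes the identification with $ad(q_1^*E\vert_\Sigma)$ immediate, dissolving the ``main obstacle'' you flag at the end.
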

In the proof of Theorem \ref{preliminiary ses}, the existence of the 
following two commutative diagrams \cite[equations (9.12), (9.13)]{BGS-deformations-1} are established:
\begin{equation}\label{preliminary ses e2}
	\begin{tikzcd}
		0 \ar[r] & \mc O_{\mc Z} \ar[r] \ar[d,"\cong"] & {\ms End}\left(p_1^*E\big\vert_{\mc Z}\right) \ar[r] \ar[d] & 
		{ad}\left(p_1^*E\big\vert_{\mc Z}\right)
		\ar[r] \ar[d] & 0 \\
		0 \ar[r] & {\ms Hom}(\mc B,\, \mc B) \ar[r] & {\ms Hom}(p_1^*E,\,\mc B)
		\ar[r] & {\ms Hom}(\mc A,\, \mc B) &
	\end{tikzcd}
\end{equation}
The rows in \eqref{preliminary ses e2} are exact. 
Applying $\Phi_*$ to \eqref{preliminary ses e2} and using Corollary \ref{Phi_*B} we get the
following diagram:
\begin{equation}\label{preliminary ses e3}
	\xymatrix{
		0 \ar[r] & \mc O_{\Sigma} \ar[r] \ar[d]^{\cong} & {\ms End}\left(q_1^*E\big\vert_{\Sigma}\right)
		\ar[r] \ar[d]^{\cong} & {ad}\left(q_1^*E\big\vert_{\Sigma}\right)
		\ar[r] \ar[d]^{\Xi} & 0 \\
		0 \ar[r] & \Phi_*{\ms Hom}(\mc B, \,\mc B) \ar[r] & \Phi_*{\ms Hom}(p_1^*E,\,\mc B)
		\ar[r] & \Phi_*{\ms Hom}(\mc A, \,\mc B) &
	}
\end{equation}
(the right vertical arrow is defined to be $\Xi$).

Since the map $\Phi$ is flat and $\ms End(\mc A)$ is a vector bundle, we can apply cohomology and base change theorems, and reduce the computation of its direct images to computations of cohomology of certain sheaves on the fiber of the map $\Phi$. Combining this computation with Theorem \ref{preliminiary ses} we get the direct images of $\ms Hom(\mc A,\, \mc B)$. 
\begin{corollary}[{\cite[Corollary 9.7]{BGS-deformations-1}}]\label{corollary main theorems}\mbox{}
	\begin{enumerate}
		\item There is the following short exact sequence on $\Sigma$
		\begin{equation*}
			0\,\longrightarrow\, ad(q_1^*E\big\vert_{\Sigma})\,\stackrel{\Xi}{\longrightarrow} \,
			\Phi_*\ms Hom(\mc A,\,\mc B)\,\longrightarrow\, q_1^*T_C\big\vert_{\Sigma}\longrightarrow 0
		\end{equation*}
		($\Xi$ is the map in \eqref{preliminary ses e3}).

		\item $R^i\Phi_*\ms Hom(\mc A,\,\mc B)\,=\,0$ for all $i\,\geqslant\, 1$.
		
		\item $H^i(\Sigma,\, \Phi_*\ms Hom(\mc A,\,\mc B))\,\stackrel{\sim}{\longrightarrow}
		\,H^i(\mc Z,\,\ms Hom(\mc A,\,\mc B))$ for all $i$.
	\end{enumerate}
\end{corollary}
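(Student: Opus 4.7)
By Theorem \ref{preliminiary ses}, parts (1) and (2) reduce to identifying the cokernel of $\Xi$ with $q_1^{*}T_C\big\vert_{\Sigma}$ and establishing the vanishing $R^{j}\Phi_{*}\ms{End}(\mc A)\,=\,0$ for $j\,\geqslant\,2$. Since $\mc A$ is locally free on $C\times\mc Q$ and $\Phi$ is flat with projective fibers, cohomology and base change applies to the locally free sheaf $\ms{End}(\mc A)$. Thus the computation of each $R^{j}\Phi_{*}\ms{End}(\mc A)$ at a point $(x,\alpha)\,\in\, C\times C^{(d)}$ reduces to computing $H^{j}$ of $\ms{End}(\mc A)$ on the fiber $\Phi^{-1}(x,\alpha)\,=\,\{x\}\times\phi^{-1}(\alpha)$.

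I would split into two cases according to whether $(x,\alpha)$ lies on $\Sigma$. If $x\,\notin\,\mathrm{supp}(\alpha)$, then $\mc B$ vanishes in a neighborhood of the fiber, so $\mc A$ agrees with $p_{1}^{*}E$ there. Hence the restriction of $\ms{End}(\mc A)$ to $\{x\}\times\phi^{-1}(\alpha)$ is isomorphic to $\ms{End}(E_{x})\otimes\mc O_{\phi^{-1}(\alpha)}$, whose higher cohomology vanishes by Corollary \ref{cor-direct image of structure sheaf}(2) applied to $\phi^{-1}(\alpha)$. This already shows that $R^{j}\Phi_{*}\ms{End}(\mc A)$ is supported on $\Sigma$ for every $j\,\geqslant\, 1$.

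The main work is in the fibers over $(x,\alpha)\,\in\,\Sigma$, where $\alpha\,=\,\sum n_{i}x_{i}$ and $x\,=\,x_{i}$ for some $i$. Using the decomposition $\phi^{-1}(\alpha)\,\cong\,\prod_{j}\mc Q_{j}$ into a product of punctual Quot schemes $\mc Q_{j}$ at each $x_{j}$, together with the universal short exact sequence $0\,\to\,\mc A\,\to\, p_{1}^{*}E\,\to\,\mc B\,\to\, 0$, I would compute $H^{j}(\{x\}\times\phi^{-1}(\alpha),\,\ms{End}(\mc A))$ explicitly. The expected outcome is $H^{1}\,\cong\, T_{C}|_{x}$ and $H^{j}\,=\,0$ for $j\,\geqslant\, 2$. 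The identification must be carried out functorially in $(x,\alpha)$ so that the pointwise isomorphisms glue into a sheaf isomorphism $R^{1}\Phi_{*}\ms{End}(\mc A)\,\cong\, q_{1}^{*}T_{C}\big\vert_{\Sigma}$. The main obstacle is precisely this local-to-global identification at non-reduced points of $\alpha$; the motivating case $d\,=\,1$ with $\mc Q(E,1)\,=\,\mathbb{P}(E)$ serves as a guide for why $T_{C}$ appears.

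Part (3) is then formal. Since $\mc B$, and hence $\ms{Hom}(\mc A,\mc B)$, is supported on $\mc Z$, we have $H^{i}(C\times\mc Q,\,\ms{Hom}(\mc A,\mc B))\,=\,H^{i}(\mc Z,\,\ms{Hom}(\mc A,\mc B))$. By part (2), the Leray spectral sequence for $\Phi$ degenerates, yielding
$$H^{i}(C\times\mc Q,\,\ms{Hom}(\mc A,\mc B))\,\cong\, H^{i}(C\times C^{(d)},\,\Phi_{*}\ms{Hom}(\mc A,\mc B)).$$
Since $\Phi$ maps $\mc Z$ into $\Sigma$ by diagram \eqref{dpi}, the pushforward $\Phi_{*}\ms{Hom}(\mc A,\mc B)$ is supported on $\Sigma$, so this last group equals $H^{i}(\Sigma,\,\Phi_{*}\ms{Hom}(\mc A,\mc B))$, proving (3).
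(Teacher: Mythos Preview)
Your strategy is exactly the one the paper sketches in the paragraph preceding the corollary: reduce (1) and (2) via Theorem~\ref{preliminiary ses} to computing $R^{j}\Phi_{*}\ms{End}(\mc A)$, then invoke cohomology and base change (using flatness of $\Phi$ and local freeness of $\ms{End}(\mc A)$) to pass to fiberwise computations. Your treatment of the off-$\Sigma$ case and your derivation of part (3) from (2) via the Leray spectral sequence are correct and complete.

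The paper itself gives no further detail here, deferring the actual fiber computation to \cite{BGS-deformations-1}. You go somewhat further than the paper's sketch by isolating the two cases and disposing of the easy one, and you correctly identify that the substantive content lies in the fibers over $(x,\alpha)\in\Sigma$. But you then only state the ``expected outcome'' $H^{1}\cong T_{C}\vert_{x}$, $H^{j}=0$ for $j\geqslant 2$, and you yourself flag the functorial identification with $q_{1}^{*}T_{C}\big\vert_{\Sigma}$ as ``the main obstacle''---without carrying it out. So your proposal is an accurate and slightly more fleshed-out version of the paper's own outline, with the same gap: the key fiberwise calculation and its globalization are asserted rather than performed. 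This is not a wrong approach, just an incomplete one; the missing step is genuinely the heart of \cite[Corollary~9.7]{BGS-deformations-1}, and an appeal to the $d=1$ case as motivation does not substitute for it.
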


Now applying the Leray spectral sequence for the sheaf $\ms Hom(\mc A,\, \mc B)$ and the map $\Phi$ we can compute the cohomology of $\ms Hom(\mc A,\, \mc B)$. Combining this with Proposition \ref{T_Q} we get

\begin{theorem}[{\cite[Theorem 9.11]{BGS-deformations-1}}]\label{cor-cohomology of T_Q-1}
	Let $d,\,g_C\,\geqslant\, 2$. Then there is an exact sequence
	$$0 \,\longrightarrow\, H^1\left(\Sigma,\,q_1^*ad(E)\big\vert_{\Sigma}\right)\,\longrightarrow\,
	H^1(\mc Q,\,T_{\mc Q})\,\longrightarrow\, H^1(C,\,T_{C})\,\longrightarrow\, 0\,.$$
\end{theorem}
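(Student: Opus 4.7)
The plan is to compute $H^1(\mc Q,T_{\mc Q})$ by transferring the question, via Leray spectral sequences, to cohomology on $\Sigma$, and then reading the long exact sequence associated to the short exact sequence in Corollary \ref{corollary main theorems}(1) against the product structure $\Sigma\cong C\times C^{(d-1)}$. For the first reduction, Proposition \ref{T_Q} gives $T_\mc Q\cong p_{2*}\ms Hom(\mc A,\mc B)$; since $\ms Hom(\mc A,\mc B)$ is supported on $\mc Z$ and $\overline{p}_2\colon\mc Z\to\mc Q$ is finite, higher direct images under $p_2$ vanish and Leray yields $H^i(\mc Q,T_\mc Q)\cong H^i(C\times\mc Q,\ms Hom(\mc A,\mc B))$. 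The vanishing $R^i\Phi_*\ms Hom(\mc A,\mc B)=0$ of Corollary \ref{corollary main theorems}(2), together with the fact that $\Phi_*\ms Hom(\mc A,\mc B)$ is supported on $\Sigma$, then yields $H^i(\mc Q,T_\mc Q)\cong H^i(\Sigma,\Phi_*\ms Hom(\mc A,\mc B))$ via a second Leray argument.

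Next I would feed the short exact sequence
\[
0\to ad(q_1^*E|_\Sigma)\to\Phi_*\ms Hom(\mc A,\mc B)\to q_1^*T_C|_\Sigma\to 0
\]
of Corollary \ref{corollary main theorems}(1) into cohomology on $\Sigma$. To compute the cohomology of $q_1^*T_C|_\Sigma$ I use the natural isomorphism $\Sigma\cong C\times C^{(d-1)}$ sending $(x,\alpha)\mapsto(x,\alpha-x)$, under which $\overline{q}_1$ becomes the projection to $C$. The K\"unneth formula together with $H^0(C,T_C)=0$ (valid since $g_C\geqslant 2$) gives $H^0(\Sigma,q_1^*T_C|_\Sigma)=0$ and $H^1(\Sigma,q_1^*T_C|_\Sigma)=H^1(C,T_C)$. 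Substituting into the long exact sequence produces the injection $H^1(\Sigma,q_1^*ad(E)|_\Sigma)\hookrightarrow H^1(\mc Q,T_\mc Q)$ and a map $H^1(\mc Q,T_\mc Q)\to H^1(C,T_C)$ whose cokernel embeds into $H^2(\Sigma,q_1^*ad(E)|_\Sigma)$.

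The main obstacle is proving that the connecting map $\delta\colon H^1(C,T_C)\to H^2(\Sigma,q_1^*ad(E)|_\Sigma)$ vanishes. The plan is to show that the short exact sequence on $\Sigma$ is the pullback along $\overline{q}_1$ of a natural Atiyah-type sequence $0\to ad(E)\to\mc G\to T_C\to 0$ on $C$. This should follow by tracing through diagram \eqref{preliminary ses e3}, since both $\mc O_\Sigma$ and $q_1^*\ms End(E)|_\Sigma$ there are pullbacks from $C$, and the construction of $\Xi$ is natural; one then checks that $\Phi_*\ms Hom(\mc A,\mc B)$ and the map $\Xi$ descend to $C$. Once this is in place, the K\"unneth decomposition $H^i(\Sigma,\overline{q}_1^*\mc F)=\bigoplus_{p+q=i}H^p(C,\mc F)\otimes H^q(C^{(d-1)},\mc O)$ is compatible with connecting maps, and on the only nonzero summand $H^1(C,T_C)\otimes H^0(C^{(d-1)},\mc O)$ of $H^1(\Sigma,q_1^*T_C|_\Sigma)$ the map $\delta$ factors through the connecting homomorphism $\delta_C\colon H^1(C,T_C)\to H^2(C,ad(E))$, which vanishes because $\dim C=1$. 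This gives $\delta=0$ and the desired exact sequence.
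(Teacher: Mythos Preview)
The paper itself does not prove this statement; it is quoted from \cite[Theorem~9.11]{BGS-deformations-1}, and the only indication given here is the sentence preceding the theorem: apply the Leray spectral sequence for $\Phi$ to $\ms Hom(\mc A,\mc B)$ and combine with Proposition~\ref{T_Q}. Your reduction steps --- passing from $H^i(\mc Q,T_{\mc Q})$ to $H^i(\Sigma,\Phi_*\ms Hom(\mc A,\mc B))$ via Proposition~\ref{T_Q}, finiteness of $\overline p_2$, and Corollary~\ref{corollary main theorems}(2),(3) --- are correct and implement exactly that sketch. Likewise, feeding the short exact sequence of Corollary~\ref{corollary main theorems}(1) into cohomology and using $\Sigma\cong C\times C^{(d-1)}$ together with $H^0(C,T_C)=0$ to get $H^0(\Sigma,\bq_1^*T_C)=0$ and $H^1(\Sigma,\bq_1^*T_C)\cong H^1(C,T_C)$ is fine.

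The gap is in your argument for the vanishing of the connecting map $\delta\colon H^1(C,T_C)\to H^2(\Sigma,\bq_1^*ad(E))$. Your plan is to show that the sequence of Corollary~\ref{corollary main theorems}(1) is the $\bq_1$-pullback of an Atiyah-type sequence on $C$, so that $\delta$ factors through $H^2(C,ad(E))=0$. But the reasoning you give --- that in diagram~\eqref{preliminary ses e3} the first two columns are pullbacks from $C$ and ``the construction of $\Xi$ is natural'' --- does not establish this. Knowing that $\mc O_\Sigma$ and $\ms End(q_1^*E|_\Sigma)$ are pullbacks says nothing about the third term $\Phi_*\ms Hom(\mc A,\mc B)$, which is defined as a pushforward from $C\times\mc Q$ and a priori depends on the $C^{(d-1)}$-direction as well. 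Concretely, the extension class of the sequence lives in
\[
H^1\bigl(\Sigma,\bq_1^*(\Omega_C\otimes ad(E))\bigr)\;\cong\;\Bigl(H^1(C,\Omega_C\otimes ad(E))\otimes H^0(C^{(d-1)},\mc O)\Bigr)\ \oplus\ \Bigl(H^0(C,\Omega_C\otimes ad(E))\otimes H^1(C^{(d-1)},\mc O)\Bigr),
\]
and only the first summand corresponds to extensions pulled back from $C$. If the class has a nonzero component in the second summand, then cupping with an element of $H^1(C,T_C)\otimes H^0(C^{(d-1)},\mc O)$ lands in $H^1(C,ad(E))\otimes H^1(C^{(d-1)},\mc O)$, which need not vanish. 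So the descent claim is exactly what carries the weight, and it is not proved by what you wrote. To make the argument go through you would need an actual identification of $\Phi_*\ms Hom(\mc A,\mc B)$ with $\bq_1^*$ of (a trace-free quotient of) the Atiyah bundle of $E$; this is plausible and is presumably what is done in \cite{BGS-deformations-1}, but it requires an argument beyond diagram~\eqref{preliminary ses e3}.
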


\section{Explicit description of deformations}\label{section explicit description}

For a $\C$-scheme $X$ we shall denote by $X[\e]$ the scheme $X\times {\rm Spec}\,\C[\e]$.
For a coherent sheaf $\mc F$ on $X$ we shall denote by 
$\mc F[\e]$ its pullback to $X[\e]$ along the natural projection $X[\e]\,\longrightarrow\, X$.
Similarly, for a homomorphism of sheaves $f\,:\,\mc F\,\longrightarrow\, \mc G$ on $X$,
denote by $f[\e]$ the pullback of $f$ to $X[\e]$ by the natural projection.

\subsection{Some deformation functors}

We refer to \cite{Nitsure}, \cite{Sern} and
\cite[\href{https://stacks.math.columbia.edu/tag/0DVK}{Chapter 0DVK}]{Stk}
for basic results on deformation theory. 
Let ${\bf Sets}$ denote the category of sets.
Let ${\bf Art}_\C$ be the category
of all Artin local $\C$-algebras, with residue 
field $\C$. 

Recall from Basic example 4 in \cite[\S~1]{Nitsure} that for a scheme $X$ 
of finite type over $\C$, we have a deformation functor 
${\bf Def}_X$. In particular, we get the deformation functors 
${\bf Def}_C$ and ${\bf Def}_{\mc Q}$. 
Recall from Basic example 2 in \cite[\S~1]{Nitsure}
the deformation functor $\mc D_{\bq_1^*E}$, where 
$\bq_1$ is the map in \eqref{bar-q_1}. 
We begin by describing some of 
the other functors that will be needed. 

We want to define the 
functor ${\bf Def}_{(C,\bq_1^*E)}$.
For $A\,\in\, {\bf Art}_\C$ consider a pair 
$$(\xi\,:\,\mc C\,\longrightarrow\, {\rm Spec}\,A,\,i)\,,$$
where $\xi$ is flat and the pair fits into a Cartesian square 
\begin{equation}\label{id}
\xymatrix{
	C\ar[r]^i\ar[d] & \mc C\ar[d]^\xi\\
	{\rm Spec}\,\C \ar[r] & {\rm Spec}\,A\,.
}
\end{equation}
Let ${\rm Sym}(j)$ denote the group of permutations of
$\{1,\, \cdots,\, j\}$. For $j\,\geqslant\, 0$ define 
$$\mc C^{(j)}\,\,:=\,\,(\overbrace{\mc C\times_{{\rm Spec}\,A}\mc C\times_{{\rm Spec}\,A}
			\cdots\times_{{\rm Spec}\,A}\mc C)}^{j\text{-times}}/{\rm Sym}(j)\,.$$
Let ${\rm Spec}(B)\,\subset\, \mc C$ be an affine open subset such that 
$B$ is flat over $A$. Then $B^{\otimes n}:=B\otimes_AB\otimes_A\ldots\otimes_A B$
is flat over $A$. We have a splitting of the inclusion map
$(B^{\otimes n})^{{\rm Sym}(n)}\subset B^{\otimes n}$ 
given by $\frac{1}{n!}\sum_{\sigma \in {\rm Sym}(n)}\sigma$.
Thus, $(B^{\otimes n})^{{\rm Sym}(n)}$ is flat over $A$, 
being a summand of a flat $A$-module. 
This shows that the map $\xi$ gives rise to flat maps 
$\mc C^{(d)}\,\longrightarrow\, {\rm Spec}\,A$
and $\mc C\times_{{\rm Spec}\,A} \mc C^{(d-1)}\,\longrightarrow\, {\rm Spec}\,A$.
Now define
\begin{equation}\label{u4}
\t\Sigma\,\,:=\,\,\mc C\times_{{\rm Spec}\,A} \mc C^{(d-1)}
\end{equation}
and denote its structure morphism by 
$$\xi_\Sigma\,:\,\t \Sigma\,\longrightarrow\, {\rm Spec}\,A\,.$$
The map $i$ in \eqref{id} gives rise to a closed immersion 
$$i_\Sigma\,:\,\Sigma\,\longrightarrow\, \t \Sigma$$ 
which fits in the Cartesian square
\[\xymatrix{
	\Sigma \ar[r]^{i_\Sigma} \ar[d] & \widetilde{\Sigma}\ar[d]^{\xi_\Sigma}\\
	{\rm Spec}\,\C \ar[r] & {\rm Spec}\,A\,.
}
\]
Consider quadruples of the form $(\xi,\,i,\,\mc E,\,\theta)$, where $\xi$ and $i$ are as above,
$\mc E$ is a locally free sheaf on $\t \Sigma$ and 
$$\theta\,\,:\,\,i_\Sigma^*\mc E\,\stackrel{\sim}{\longrightarrow}\, \bq_1^*E$$
is an isomorphism. 

If $\xi$ and $\xi'$ are two maps as above, then
an $A$-isomorphism $f\,:\,\mc C\,\longrightarrow\, \mc C'$ induces an $A$-isomorphism 
$f_\Sigma\,:\,\t \Sigma \,\longrightarrow\, \t \Sigma'$.
Two such quadruples $(\xi,\,i,\,\mc E,\,\theta)$ and $(\xi',\,i',\,\mc E',\,\theta')$
are said to be equivalent if there is a pair $(f,\, \alpha)$ such that
\begin{itemize}
\item $f\,:\,\mc C\,\longrightarrow \,\mc C'$ is an $A$-isomorphism satisfying the condition 
$f\circ i\,=\,i'$, and

\item $\alpha\,:\,f_\Sigma^*\mc E'\,\longrightarrow\, \mc E$ is an isomorphism for which
$\theta\circ i_\Sigma^*\alpha\,=\,\theta'$ invoking the 
canonical isomorphism 
$$i_\Sigma^*f_\Sigma^*\mc E'\,\cong\, (f_\Sigma\circ i_\Sigma)^*\mc E'\,=\, (i'_\Sigma)^*\mc E'\,.$$
\end{itemize}
Define ${\bf Def}_{(C,\bq_1^*E)}\,:\, {\bf Art}_\C\,\longrightarrow\, {\bf Sets}$ by letting 
${\bf Def}_{(C,\bq_1^*E)}(A)$ to be the set of equivalence classes 
of quadruples $(\xi,\,i,\,\mc E,\,\theta)$ described above. One easily checks that 
this defines a functor which is in fact a deformation functor (see \cite[\S~1]{Nitsure}
for definition of deformation functor) and it satisfies the deformation
condition ({\bf H$\e$}); see \cite[\S2.3]{Nitsure}.

Clearly, there are natural transformations 
$$\mc D_{\bq_1^*E}\,\,\longrightarrow\,\, {\bf Def}_{(C,\bq_1^*E)}
\,\,\longrightarrow\,\, {\bf Def}_C\,.$$

\begin{lemma}\label{ses deformation functors}
There is a natural short exact sequence of vector spaces 
$$
0\,\longrightarrow\, \mc D_{\bq_1^*E}(\C[\e])\,\longrightarrow\,
{\bf Def}_{(C,\bq_1^*E)}(\C[\e])\,\longrightarrow\, {\bf Def}_C(\C[\e])\,\longrightarrow\, 0\,.
$$
\end{lemma}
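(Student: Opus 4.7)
The plan is to verify exactness of the sequence at each of the three positions, treating them as pointed sets; linearity is automatic, because each of the three functors satisfies the deformation condition ({\bf H$\e$}), so its value on $\C[\e]$ is a $\C$-vector space, and the displayed maps are induced by natural transformations, hence are $\C$-linear.

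For surjectivity of the right-hand map, take a first-order deformation $(\xi\,:\,\mc C\,\longrightarrow\, {\rm Spec}\,\C[\e],\, i)$ of $C$ and lift it to a quadruple. The key observation is that $E$ itself extends to a locally free sheaf $\t E$ on $\mc C$: the obstruction to doing so lies in $H^2(C,\,{\rm End}(E))$, which vanishes because $C$ is a curve. Define $\mc E$ to be the pullback of $\t E$ to $\t \Sigma$ along the projection $\t \Sigma \,\longrightarrow\, \mc C$ to the first factor; the Cartesian square defining $i_\Sigma$, together with an isomorphism $i^*\t E \,\cong\, E$, then yields the required identification $\theta\,:\,i_\Sigma^*\mc E \,\stackrel{\sim}{\longrightarrow}\, \bq_1^*E$.

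For exactness in the middle, suppose a class $[(\xi,\, i,\, \mc E,\, \theta)]$ maps to the trivial class in ${\bf Def}_C(\C[\e])$. Choose an $\C[\e]$-isomorphism $f\,:\,\mc C \,\stackrel{\sim}{\longrightarrow}\, C[\e]$ with $f\circ i\,=\,i_0$, and use it to replace the quadruple by an equivalent one based on the trivial deformation $C[\e]$, namely $(C[\e],\, i_0,\, (f_\Sigma)_*\mc E,\, \theta')$. This representative records exactly an element of $\mc D_{\bq_1^*E}(\C[\e])$ that maps to the given class. For injectivity of the left-hand map, if two first-order deformations of $\bq_1^*E$ become equivalent in ${\bf Def}_{(C,\bq_1^*E)}(\C[\e])$, the witnessing automorphism $f\,:\,C[\e]\,\longrightarrow\, C[\e]$ with $f\circ i_0\,=\,i_0$ lies in a torsor over $H^0(C,\,T_C)\,=\,0$ (valid because $g_C\,\geqslant\, 2$), so $f\,=\,{\rm id}$, $f_\Sigma\,=\,{\rm id}$, and the equivalence in ${\bf Def}_{(C,\bq_1^*E)}$ reduces to the defining one in $\mc D_{\bq_1^*E}$.

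The only genuine inputs will be the two cohomological vanishings $H^2(C,\,{\rm End}(E))\,=\,0$ and $H^0(C,\,T_C)\,=\,0$, both immediate in the present setting. The main point requiring care is the careful unwinding of the equivalence relation defining ${\bf Def}_{(C,\bq_1^*E)}$, so as to see that equivalences of the middle functor whose underlying automorphism of $C[\e]$ is the identity are \emph{exactly} the isomorphisms of deformations of $\bq_1^*E$; once this is in place, the three exactness statements reduce to the cohomological vanishings above.
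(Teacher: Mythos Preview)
Your argument is correct and follows the same overall strategy as the paper: for surjectivity, extend $E$ to a bundle on the deformed curve $\mc C$ and then pull back along the projection $\t\Sigma\to\mc C$. The paper carries out this extension by an explicit two-chart cocycle construction (choosing affine opens $U_1,U_2$ on which $E$ is trivial, lifting the transition matrix $T\in{\rm GL}_r(R)$ to $\widetilde T\in{\rm GL}_r(R[\e])$, and gluing), whereas you invoke the vanishing of the obstruction class in $H^2(C,\ms End(E))$. Both produce the same lift, and the remaining pullback step is identical.

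The paper declares exactness at the left two terms to be ``clear'' and proves only surjectivity; you supply explicit arguments for injectivity and middle exactness. Your injectivity argument uses $H^0(C,T_C)=0$ to force the witnessing automorphism $f$ of $C[\e]$ to be the identity, which is legitimate under the paper's standing hypothesis $g_C\geqslant 2$. One small point of language: the automorphisms of $C[\e]$ over $\C[\e]$ restricting to the identity on $C$ form a \emph{group} isomorphic to $H^0(C,T_C)$, not a torsor; but since this group is trivial your conclusion $f={\rm id}$, hence $f_\Sigma={\rm id}$, stands. Your obstruction-theoretic shortcut is cleaner to state, while the paper's cocycle argument is more self-contained; neither buys anything the other does not.
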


\begin{proof}
We only need to prove surjectivity on the right, 
because it is clear that there is a short exact sequence
$$0\,\longrightarrow\, \mc D_{\bq_1^*E}(\C[\e])\,\longrightarrow \,
{\bf Def}_{(C,\bq_1^*E)}(\C[\e])\,\longrightarrow\, {\bf Def}_C(\C[\e])\,.$$

First note that for a diagram
\[\xymatrix{
C\ar[r]^i\ar[d] & \mc C\ar[d]^\xi\\
{\rm Spec}\,\C \ar[r] & {\rm Spec}\,\C[\e]
}
\]
representing an equivalence class in ${\bf Def}_C(\C[\e])$,
there is a locally free sheaf $\mc F$ on $\mc C$ 
and an isomorphism $\gamma\,:\,i^*\mc F\,\stackrel{\sim}{\longrightarrow}\, E$.
For the benefit of the reader we sketch a proof (see also \cite[Proposition 4.3]{Chen}). 
Let $U_1\subset C$ be an affine open such that $E$
is trivial over $U$. Then $C\setminus U_1$ is a finite set 
of points. Thus, we may find an affine open set $U_2$ 
such that $(C\setminus U_1)\subset U_2$ and $E$ 
is trivial over $U_2$. Let $U_{12}$ be the affine 
open set $U_1\cap U_2$ and let $R\,:=\,\Gamma(U_{12},\mc O_C)$. 
The bundle $E$ is obtained by gluing trivial bundles over ${\rm Spec}(R)=U_{12}$
using a matrix $T\in {\rm GL}_r(R)$. Let $\widetilde T\,\in \,{\rm GL}_r(R[\e])$ 
be a lift of $T$. The deformation $\mc C$ is obtained by 
gluing $U_1[\e]$ and $U_2[e]$ along $U_{12}[\e]$. This gluing 
corresponds to an isomorphism $\varphi:R[\e]\longrightarrow R[\e]$,
which lifts the identity map $R\,\longrightarrow \,R$. It is clear that 
the map $\varphi^{\oplus r}\circ \widetilde T$ 
$$R[\e]^{\oplus r}\,\,\stackrel{\widetilde T}{\longrightarrow}\,\,R[\e]^{\oplus r} 
\,\,\stackrel{\varphi^{\oplus r}}{\longrightarrow}\,\,R[\e]^{\oplus r}$$
is an isomorphism of modules which is compatible with the map $\varphi$. 
Moreover, this reduces to the map $T$ when we set $\e\,=\,0$. Thus,
using the isomorphism $\varphi^{\oplus r}\circ \widetilde T$ 
we may lift the automorphism $\varphi$ to an automorphism of 
the trivial vector bundle. Gluing using this automorphism gives a vector bundle
$\mc F$ on $\mc C$ and an isomorphism
\begin{equation}\label{eg}
\gamma\,:\, i^*\mc F \,\longrightarrow\, E.
\end{equation}

Recall that $\bq_1$ 
denotes the projection $\Sigma\,\longrightarrow\, C$ (see \eqref{bar-q_1}). 
Similarly, we define $$\t q_1\,\,:\,\,\t \Sigma\,\longrightarrow\, \mc C$$
to be the natural projection (see \eqref{u4}). Consider the commutative square
\[\xymatrix{
\Sigma\ar[r]^{i_\Sigma}\ar[d]_{\bq_1} & \t \Sigma\ar[d]^{\t q_1}\\
C\ar[r]^{i} & \mc C
}
\]
Consider $\mc E\,:=\,\t q_1^*\mc F$ (see \eqref{eg}), and take $\theta\, :\,
i_{\Sigma}^*\mc E\, \longrightarrow\, \bq_1^*E$ to be the composite isomorphism
$$i_{\Sigma}^*\mc E\,=\,i_{\Sigma}^*\t q_1^*\mc F\,\cong\, \bq_1^*i^*\mc F
\,\xrightarrow{\,\,\bq_1^*\gamma\,\,}\,\bq_1^*E\,.$$
It is clear that the class of the quadruple $(\xi,\,i,\,\mc E,\,\theta)$ in 
${\bf Def}_{(C,\bq_1^*E)}(\C[\e])$ maps to the class of the tuple 
$(\xi,\,i)\,\in\, {\bf Def}_C(\C[\e])$. This completes the proof of the lemma.
\end{proof}

We will need to consider the deformation functor described 
in \cite[\href{https://stacks.math.columbia.edu/tag/0E3T}{Example 0E3T}]{Stk}, or
in simpler terms described in \cite[Definition 3.4.1]{Sern}. Let $f\,:\,X\,\longrightarrow\, Y$ 
be a morphism of $\C$-schemes. For $A\,\in\, {\bf Art}_\C$, let 
${\bf Def}_{X\stackrel{f}{\longrightarrow}Y}(A)$ be the collection of diagrams 
of the following type modulo equivalence:
\[\xymatrix{
	X\ar[r]\ar[d]_f & \t X\ar[d]^{\t f} \\
	Y\ar[r]\ar[d] & \t Y\ar[d] \\
	{\rm Spec}\,\C\ar[r] & {\rm Spec}\,A
}
\]
Both squares in the above diagram are Cartesian while 
the maps $\t X\,\longrightarrow\, {\rm Spec}\,A$ and $\t Y\,\longrightarrow\, {\rm Spec}\,A$
are flat. We leave it to the reader to check that 
this defines a deformation functor and that it satisfies 
the condition ({\bf H$\e$}); see \cite[\S~2.3]{Nitsure}.
The reader is referred to \cite[\href{https://stacks.math.columbia.edu/tag/06L9}{Section 06L9}]{Stk},
\cite[\href{https://stacks.math.columbia.edu/tag/0E3U}{Lemma 0E3U}]{Stk}.

\subsection{A transformation of deformation functors}

Consider the finite map $\Sigma\,\longrightarrow\, C^{(d)}$ and
the relative Quot scheme 
\begin{equation}\label{define Pi}
\mc Q'\,:=\,{\rm Quot}_{\Sigma/C^{(d)}}(\bq_1^*E,\,d)
\,\stackrel{\Pi}{\longrightarrow}\, C^{(d)}\,.
\end{equation}
We will show that there is a natural transformation of deformation 
functors
\begin{equation}\label{eta}
\eta\,:\,{\bf Def}_{(C,\bq_1^*E)}\,\longrightarrow\, {\bf Def}_{\mc Q'}\,.
\end{equation}

Let $(\xi,\,i,\,\mc E,\,\theta)$ be a quadruple
representing an equivalence class in ${\bf Def}_{(C,\bq_1^*E)}(A)$.
Consider the relative Quot scheme 
\begin{equation}
{\rm Quot}_{\t \Sigma/\mc C^{(d)}}(\mc E,\,d)\,\stackrel{\t \Pi}{\longrightarrow}\, \mc C^{(d)}
\,\longrightarrow\, {\rm Spec}\,A\,.
\end{equation}

\begin{lemma}
${\rm Quot}_{\t \Sigma/\mc C^{(d)}}(\mc E,\,d)$ is flat over $A$.
\end{lemma}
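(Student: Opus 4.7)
The plan is to prove flatness of $\widetilde Q' := {\rm Quot}_{\t\Sigma/\mc C^{(d)}}(\mc E, d)$ over ${\rm Spec}\,A$ by embedding it into a relative Grassmannian, over which flatness is evident, and then analyzing the defining equations of $\widetilde Q'$ inside this Grassmannian.

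Since the map $\pi: \t\Sigma \to \mc C^{(d)}$ induced from the addition on the relative symmetric product is finite and flat of degree $d$, the pushforward $\pi_*\mc E$ is a locally free sheaf of rank $rd$ on $\mc C^{(d)}$, and $\pi_*\mc O_{\t\Sigma}$ is a locally free rank-$d$ sheaf of algebras. Under the equivalence between coherent sheaves on $\t\Sigma$ and coherent $\pi_*\mc O_{\t\Sigma}$-modules on $\mc C^{(d)}$, a length-$d$, $\mc C^{(d)}$-flat quotient of $\mc E$ corresponds to a locally-free rank-$d$ quotient of $\pi_*\mc E$ whose kernel is stable under the $\pi_*\mc O_{\t\Sigma}$-action. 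This identifies $\widetilde Q'$ with a closed subscheme of the relative Grassmannian $G := {\rm Grass}_{\mc C^{(d)}}(\pi_*\mc E, d) \to \mc C^{(d)}$, cut out by the vanishing of a specific map of locally free sheaves on $G$ built from the universal subbundle $K_G$ and the action of $\pi_*\mc O_{\t\Sigma}$.

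Since $\mc C^{(d)}$ is smooth (in particular flat) over ${\rm Spec}\,A$, and $G$ is smooth over $\mc C^{(d)}$, the relative Grassmannian $G$ is flat over ${\rm Spec}\,A$. To conclude flatness of $\widetilde Q' \subset G$ over $A$, I would invoke the base-change compatibility of the Quot scheme: for any $A$-algebra $A'$, the pullback $\widetilde Q' \otimes_A A'$ is the analogous Quot scheme for the pulled-back data, embedded in $G_{A'}$ via the same recipe. Combined with the fact that the closed fiber of $\widetilde Q' \to {\rm Spec}\,A$ identifies with the classical $\mc Q' := {\rm Quot}_{\Sigma/C^{(d)}}(\bq_1^*E, d)$ of expected dimension, and that the codimension of $\widetilde Q'$ in $G$ is preserved under arbitrary base change to residue fields, one argues that the defining ideal of $\widetilde Q' \subset G$ is locally generated by a regular sequence of the correct length, so that the quotient $\widetilde Q'$ is itself $A$-flat.

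The main obstacle is the verification that the defining ideal is locally generated by a regular sequence of the expected codimension. In general, the vanishing locus of a section of an $A$-flat vector bundle on an $A$-flat ambient need not be $A$-flat; here one exploits the explicit structure of the action map $(\pi_*\mc O_{\t\Sigma}) \otimes_{\mc O_G} K_G \to Q_G$ (with $Q_G$ the universal rank-$d$ quotient of $\pi_*\mc E$ on $G$), together with a careful local analysis at each point of $\widetilde Q'$, to establish the regularity of the defining sequence. The codimension count used here is the classical computation of the dimension of Quot schemes of length-$d$ quotients of locally free sheaves on the zero-dimensional fibers of $\pi$, which is preserved under the deformation.
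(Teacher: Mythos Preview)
Your Grassmannian-embedding strategy has a genuine gap, and it is precisely the one you flag as the ``main obstacle.'' The map whose vanishing cuts out $\widetilde{\mc Q}'$ inside $G$ is a section of $\ms Hom(K_G,Q_G)$, a bundle of rank $d(rd-d)=d^2(r-1)$, whereas the codimension of $\widetilde{\mc Q}'$ in $G$ (on the component containing $\mc Q^0$) is only $d(d-1)(r-1)$. For $d\geqslant 2$ the number of defining equations strictly exceeds the codimension, so the section is never a regular section and no ``expected codimension'' argument can produce a regular sequence. Moreover, $\mc Q'$ is in general singular and not equidimensional over points of $C^{(d)}$ lying in the deep diagonal, so there is no uniform expected dimension to invoke in the first place. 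Finally, since $A$ is Artin local with a single residue field $\C$, your appeal to ``codimension preserved under arbitrary base change to residue fields'' carries no content beyond looking at the closed fiber. The ``careful local analysis'' you promise is therefore not a detail to be filled in but the entire difficulty, and the route you have chosen does not resolve it.

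The paper's argument bypasses all of this by exploiting that $A$ is Artin local. Over an affine open $\mc W\subset \mc C^{(d)}$, the schemes $\mc W$ and $\widetilde q_2^{\,-1}(\mc W)$ are infinitesimal deformations of smooth affine $\C$-schemes $W\subset C^{(d)}$ and $\widetilde W\subset\Sigma$, hence trivial: $\mc W\cong W\times{\rm Spec}\,A$ and $\widetilde q_2^{\,-1}(\mc W)\cong \widetilde W\times{\rm Spec}\,A$. Likewise $\mc E$ restricted there is a deformation of a vector bundle on an affine scheme, hence trivial, i.e.\ isomorphic to the pullback of $\bq_1^*E|_{\widetilde W}$. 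Base change for Quot schemes then gives
\[
\widetilde\Pi^{-1}(\mc W)\;\cong\;{\rm Quot}_{\widetilde W/W}\bigl(\bq_1^*E|_{\widetilde W},\,d\bigr)\times{\rm Spec}\,A,
\]
which is visibly flat over $A$. No smoothness, equidimensionality, or regular-sequence property of $\mc Q'$ is needed.
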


\begin{proof}
Let 
\begin{equation}\label{h1}
\bq_2\,:\,\Sigma\,\longrightarrow\, C^{(d)}\ \ \text{ and }\ \
\t q_2\,:\,\t\Sigma\,\longrightarrow\, \mc C^{(d)}
\end{equation} 
denote the restrictions of the natural projection maps
$C\times C^{(d)}\,\longrightarrow\, C^{(d)}$ and
$\mc C\times_{{\rm Spec}\,A} \mc C^{(d)}\,\longrightarrow\, \mc C^{(d)}$
respectively. Let $\mc W\,\subset\, \mc C^{(d)}$ denote 
an affine open set, and define $\t{\mc W}\,:=\,\t q_2^{-1}(\mc W)$, where
$\t q_2$ is defined in \eqref{h1}. Using the base
change property of Quot schemes we have 
$$\t \Pi^{-1}(\mc W)\,=\, {\rm Quot}_{\t{\mc W}/\mc W}(\mc E\big\vert_{\t{\mc W}},\,d)\,.$$
Define $W\,:=\,C^{(d)}\cap \mc W$ and $\t W\,:=\,\Sigma \cap \t{\mc W}$. Thus,
there is a Cartesian square 
\[\xymatrix{
\t W\ar[r]\ar[d]_{\bq_2} & \t{\mc W}\ar[d]^{\t q_2}\\
W\ar[r] & \mc W
}
\]
Both $W$ and $\t W$ are smooth affine schemes, from which it follows that 
$\mc W\,=\,W\times {\rm Spec}\,A$ and $\t{\mc W}\,=\,\t W\times {\rm Spec}\,A$.
Further, $\mc E\big\vert_{\t{\mc W}}$ is isomorphic to the pullback of 
$\bq_1^*E\big\vert_{\t W}$ to $\t W\times {\rm Spec}\,A$ along the projection map 
$\t W\times {\rm Spec}\,A \longrightarrow \t W$. Using base change property of Quot schemes we get that
$$ \t \Pi^{-1}(\mc W)\,=\, {\rm Quot}_{\t{W}/W}(\bq_1^*E\big\vert_{\t W},\,d)\times {\rm Spec}\,A\,.$$
Thus, $\t \Pi^{-1}(\mc W)$ is flat over $A$. Covering $\mc C^{(d)}$ 
by such open subsets the proof of the lemma is completed.
\end{proof}

Define 
$$
\zeta\,\,:\,\, {\rm Quot}_{\t \Sigma/\mc C^{(d)}}(\mc E,\,d)\,\longrightarrow \,{\rm Spec}\,A\,.
$$
Using the universal quotient on $\Sigma\times_{C^{(d)}}\mc Q'
\,\cong\, \t \Sigma\times_{\mc C^{(d)}}\mc Q'$ 
and the isomorphism $\theta\,:\, i_\Sigma^*\mc E\,\longrightarrow \,\bq_1^*E$ we get 
a map 
$$j\,\,:\,\,\mc Q' \,\longrightarrow\, {\rm Quot}_{\t \Sigma/\mc C^{(d)}}(\mc E,\,d)\,.$$
It is straightforward to check that there is the following Cartesian square 
\[\xymatrix{
	\mc Q'\ar[r]^<<<<<{j}\ar[d] & {\rm Quot}_{\t \Sigma/\mc C^{(d)}}(\mc E,\,d)\ar[d]^{\zeta}\\
	{\rm Spec}\,\C\ar[r] & {\rm Spec}\,A
}
\]
Now define the map $\eta$ in \eqref{eta} by the rule
\begin{equation}\label{eta-define}
\eta(\xi,\,i,\,\mc E,\,\theta)\,=\,(\zeta,\,j)\,.
\end{equation}	
It is easy to check that $\eta$ defines a natural transformation 
${\bf Def}_{(C,\bq_1^*E)}\,\longrightarrow\, {\bf Def}_{\mc Q'} $ 
and we leave this to the reader. 

\begin{remark}\label{factors through deformation of Pi}
The natural transformation $\eta$ in \eqref{eta} factors as 
$$
{\bf Def}_{(C,\bq_1^*E)}\,\longrightarrow\, {\bf Def}_{\mc Q'\,
\stackrel{\Pi}{\longrightarrow}\,C^{(d)}} \,\longrightarrow\, {\bf Def}_{\mc Q'}
$$
We leave this verification to the reader.
\end{remark}

\subsection{An open subset of $\mc Q'$}

Recall the notation of Section \ref{se9.1}. We have the universal quotient $\mc B$
on $C\times \mc Q$ (see \eqref{universal-seq-C times Q}), and the sheaf $\mc B$ is supported on 
$\mc Z\,=\, \Sigma\times_{C^{(d)}}\mc Q$. Let $p_\Sigma$ 
denote the projection from $\Sigma\times_{C^{(d)}}\mc Q$ to $\Sigma$. 
Thus, the universal quotient on $\mc B$ in \eqref{universal-seq-C times Q} factors as 
$$p_\Sigma^*\bq_1^*E\,\longrightarrow\, \mc B\,\longrightarrow\, 0\,.$$
This quotient defines a map 
\begin{equation}\label{j}
j\,\,:\,\,\mc Q\,\longrightarrow\, \mc Q'\,.
\end{equation}
Recall the Hilbert-Chow morphism $\phi\,:\,\mc Q\,\longrightarrow\, C^{(d)}$ in \eqref{hc}.
For a closed point $q\,\in\, \mc Q$, let $E\,\stackrel{q}{\longrightarrow}\, \mc B_q$
denote the quotient on $C$ corresponding to it. 
The map $j$ in \eqref{j} has the description 
\[(E\,\stackrel{q}{\longrightarrow}\, \mc B_q) \,\longmapsto\, 
\left(E\big\vert_{\phi(q)}\,\stackrel{j(q)}{\longrightarrow}\, \mc B_q\right)\,.\]
It follows easily that $j$ is an inclusion on closed points.
In fact, it can be easily checked that $j$ induces an
inclusion $\mc Q(T)\,\longrightarrow\, \mc Q'(T)$ for any $\C$-scheme $T$. In
particular, it induces an inclusion at the level of Zariski tangent
spaces.

Let
\begin{equation}\label{ab}
\mc Q^0\,\subset\, \mc Q
\end{equation}
be the open subset parametrizing the 
quotients $\{q\,:\,E\,\longrightarrow\, \mc B_q\,\}$ such that
$\mc B_q\,\cong\, \mc O_D$, where $D\,=\,\phi(q)$. 
A proof that $\mc Q^0$ is open can be found in 
\cite[Definition 24, Lemma 25]{GS-nef}.
Let
\begin{equation}\label{define j_0}
j_0\,\,:\,\,\mc Q^0\,\subset\,\mc Q\,\longrightarrow\, \mc Q'
\end{equation}
be the restriction of $j$ in \eqref{j} to the open subset $\mc Q^0$.

We will use the following general fact in the proof of Lemma \ref{lem-1}.

\begin{lemma}\label{general fact open immersion}
Let $f\,:\,Y\,\longrightarrow\, X$
be a morphism of $\C$-schemes of finite type. 
Assume that $Y$ is smooth, irreducible and
$f$ is injective on closed points of $Y$. For all closed points $y\in Y$ 
assume that the map $df$ of Zariski tangent spaces
$$df_y\,:\,T_yY\,\longrightarrow \,T_{f(y)}X$$
is an isomorphism. Then the image $f(Y)$ is an open
subset of $X$, and the map $f$ is an isomorphism between $Y$ and $f(Y)$.
\end{lemma}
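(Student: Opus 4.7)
The plan is to prove that $f$ is \'etale, conclude $f(Y)$ is open in $X$, and identify $Y\to f(Y)$ as a local isomorphism which then globalizes. First I would show $f$ is unramified: the hypothesis that $df_y$ is an isomorphism (in particular injective) at each closed $y$ is dual to surjectivity of the cotangent map $\mathfrak{m}_{f(y)}/\mathfrak{m}_{f(y)}^2\to \mathfrak{m}_y/\mathfrak{m}_y^2$, which by Nakayama gives $\mathfrak{m}_y=\mathfrak{m}_{f(y)}\mathcal{O}_{Y,y}$; combined with $\kappa(y)=\kappa(f(y))=\C$, this is unramifiedness of $f$ at $y$. The unramified locus is open in $Y$, and its complement is a closed subset of the finite-type $\C$-scheme $Y$ containing no closed points, hence empty. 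So $f$ is unramified on $Y$, and in particular quasi-finite.

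Next I would check that $X$ is smooth of dimension $n:=\dim Y$ along $f(Y)$. Quasi-finiteness and irreducibility of $Y$ give $\dim\overline{f(Y)}=n$, so $\dim_{f(y)}X\geq n$; conversely $\dim_{f(y)}X\leq \dim T_{f(y)}X=\dim T_yY=n$. Equality of local and tangent dimensions is smoothness of $X$ at $f(y)$, so after replacing $X$ by its smooth locus (an open subset containing every $f(y)$) we may assume $X$ is smooth of dimension $n$.

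Now $f$ is a quasi-finite unramified morphism between smooth $\C$-schemes of the same dimension, hence flat by miracle flatness, hence \'etale. In particular $f$ is open, so $f(Y)$ is open in $X$. At each closed $x\in f(Y)$ the \'etale fiber is $\mathrm{Spec}$ of a finite product of copies of $\C$, reduced to a single factor by the injectivity hypothesis, and therefore a single reduced $\C$-point; this forces $f$ to be a local isomorphism at each closed preimage (for instance, by Zariski's main theorem, $f$ is finite of degree one on a neighborhood of $y$). Openness of the locus where $f$ is a local isomorphism, together with the Jacobson property and injectivity on closed points, glue these local isomorphisms into a global isomorphism $Y\cong f(Y)$.

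The main technical subtlety, recurring at several steps, is promoting pointwise hypotheses at closed points to global properties on $Y$ (unramifiedness, \'etaleness, isomorphism onto image); each instance is handled uniformly by openness of the relevant locus combined with the Jacobson density of closed points in a finite-type $\C$-scheme.
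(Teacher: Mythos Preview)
Your proof is correct and follows essentially the same strategy as the paper: establish by dimension counting that $X$ is smooth of dimension $\dim Y$ at every point of $f(Y)$, and then conclude. The paper compresses the second half into ``Now the lemma follows easily,'' whereas you spell it out via unramifiedness, miracle flatness, \'etaleness, and the diagonal/Jacobson argument to upgrade injectivity on closed points to an open immersion; this is exactly the content the paper is leaving implicit.
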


\begin{proof}
Let $X'$ denote an irreducible component of $X$ 
which contains the image $f(Y)$ with the reduced sub-scheme structure. 
As $f$ is injective 
on closed points, it follows that ${\rm dim}(Y)\,\leqslant\, {\rm dim}(X')$. 
We also have
$${\rm dim}_\C\,T_yY\,=\,{\rm dim}(Y)\,\leqslant\, {\rm dim}(X')\,\leqslant\, 
{\rm dim}_\C\,T_{f(y)}X'\,\leqslant \,{\rm dim}_\C\,T_{f(y)}X\,.$$
The hypothesis of the Lemma says that ${\rm dim}_\C\,T_yY\,=\,{\rm dim}_\C\,T_{f(y)}X$.
It follows that all the quantities appearing in the above equation
are equal. In particular, we get that $X'$ is smooth at $f(y)$. This shows
that there are equalities at each step in the following sequence of 
inequalities
$${\rm dim}_\C\,T_{f(y)}X'\,=\,{\rm dim}(X')\,\leqslant \,{\rm dim}_{f(y)}(X)
\,\leqslant\, {\rm dim}_\C\,T_{f(y)}X\,.$$
This proves that $X$ is smooth at $f(y)$ of same dimension as $Y$.
Now the lemma follows easily.
\end{proof}
 
\begin{lemma}\label{lem-1}
The map $j_0$ in \eqref{define j_0} is an open immersion.
\end{lemma}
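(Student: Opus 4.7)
My plan is to verify the three hypotheses of Lemma \ref{general fact open immersion} for the morphism $j_0\,:\,\mc Q^0\,\longrightarrow\, \mc Q'$. The smoothness and irreducibility of $\mc Q^0$ are immediate, as it is a nonempty open subset of the smooth irreducible variety $\mc Q$ of dimension $rd$. Injectivity of $j_0$ on closed points is contained in the discussion preceding \eqref{define j_0}, where it was observed that $j$ induces an inclusion $\mc Q(T)\,\hookrightarrow\, \mc Q'(T)$ for every $\C$-scheme $T$; in particular this holds for $T\,=\,{\rm Spec}\,\C$, and the same property is inherited by $j_0$.

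The substantive task will be to verify that, at every closed point $q\,\in\, \mc Q^0$, the differential $dj_0$ is an isomorphism on Zariski tangent spaces. Taking $T\,=\,{\rm Spec}\,\C[\e]$ in the injectivity on $T$-points shows at once that $dj_0$ is injective. Since $\mc Q^0$ is smooth of dimension $rd$, one has $\dim_\C T_q\mc Q^0\,=\,rd$, so it will suffice to establish the upper bound $\dim_\C T_{j_0(q)}\mc Q'\,\leqslant\, rd$.

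To produce this upper bound I would exploit the functor of points of $\mc Q'\,=\,{\rm Quot}_{\Sigma/C^{(d)}}(\bq_1^*E,\,d)$. A tangent vector at $j_0(q)$ is a ${\rm Spec}\,\C[\e]$-point of $\mc Q'$ lifting $j_0(q)$, which is precisely the data of a tangent vector $v\,\in\, T_DC^{(d)}$ (with $D\,=\,\phi(q)$) together with a $\C[\e]$-flat first-order deformation of the quotient $\bq_1^*E\big\vert_D\,\longrightarrow\, \mc B_q$ over the thickening of $\bq_2^{-1}(D)$ induced by $v$. Projecting to $v$ and invoking the standard Quot-scheme tangent space calculation over the trivial thickening (the case $v\,=\,0$) yields an exact sequence
$$0\,\longrightarrow\, \text{Hom}_D(\ker j_0(q),\,\mc B_q)\,\longrightarrow\, T_{j_0(q)}\mc Q'\,\longrightarrow\, T_DC^{(d)},$$
after using the identifications $\bq_2^{-1}(D)\,\cong\, D$ and $\bq_1^*E\big\vert_D\,=\,E\big\vert_D$.

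The final step is to compute $\ker j_0(q)$ for $q\,\in\, \mc Q^0$. Since $\mc B_q\,\cong\, \mc O_D$, locally at each $p\,\in\, D$ of multiplicity $n_p$, after a change of basis of $E$ the surjection $E\big\vert_D\,\longrightarrow\, \mc O_D$ becomes the projection $(\mc O_{C,p}/\mf{m}_p^{n_p})^{\oplus r}\,\longrightarrow\, \mc O_{C,p}/\mf{m}_p^{n_p}$ onto the first factor, whose kernel is free of rank $r-1$. Since $D$ is a disjoint union of local Artinian schemes, $\ker j_0(q)$ is in fact globally isomorphic to $\mc O_D^{\oplus(r-1)}$, so $\dim_\C\text{Hom}_D(\ker j_0(q),\,\mc B_q)\,=\,(r-1)d$. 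Combined with $\dim_\C T_DC^{(d)}\,=\,d$, the displayed exact sequence gives $\dim_\C T_{j_0(q)}\mc Q'\,\leqslant\, rd$, as required. The main obstacle I anticipate is setting up the tangent-space exact sequence for the relative Quot scheme cleanly; the identification of $\ker j_0(q)$ as a free $\mc O_D$-module is then essentially a linear-algebra check using the defining condition of $\mc Q^0$.
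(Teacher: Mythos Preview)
Your proof is correct and follows essentially the same approach as the paper: both apply Lemma~\ref{general fact open immersion}, identify $\ker j_0(q)\cong\mc O_D^{\oplus(r-1)}$, and use the standard relative Quot-scheme tangent-space sequence (which the paper cites as \cite[Proposition 2.2.7]{HL}). The only cosmetic difference is that the paper invokes ${\rm Ext}^1_{\mc O_D}(\mc O_D^{\oplus(r-1)},\mc O_D)=0$ to obtain $\dim_\C T_{j_0(q)}\mc Q'=rd$ directly, whereas you deduce the inequality $\leqslant rd$ and combine it with the injectivity of $dj_0$; the content is the same.
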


\begin{proof}
Lemma \ref{general fact open immersion} will be applied to $j_0$. 
We know that $\mc Q^0$ in \eqref{ab} is smooth, irreducible 
and $j_0$ is an inclusion on closed points and tangent vectors. 

Fix a point $q\,\in\, \mc Q^0$; let $q'\,:=\,j_0(q)\,\in\, \mc Q'$ and 
$D\,:=\,\phi(q)$. Then the point $q'$ corresponds to a short exact sequence on $D$
$$
0\,\longrightarrow\, A'\,\longrightarrow\, E\big\vert_D
\,\stackrel{q'}{\longrightarrow}\, \mc O_D\,\longrightarrow\, 0\,.$$
It is clear that $A'\,\cong\, \mc O_D^{\oplus (r-1)}$.
As ${\rm Ext}^1_{\mc O_D}(A',\,\mc O_D)\,=\,0$, an easy computation using 
the short exact sequence in \cite[Proposition 2.2.7]{HL} shows that 
$$\dim_\C T_{q}(\mc Q^0)\,=\,\dim_\C T_{q'}(\mc Q')\,.$$
It follows that $j_0$ induces an isomorphism of Zariski tangent spaces.
By Lemma \ref{general fact open immersion}, $j_0$ is an open immersion.
\end{proof}

\begin{lemma}\label{codim Q^0 Q}
Let $\mc Z_1\,:=\,\mc Q\setminus \mc Q^0$ be the complement (see
\eqref{ab}). Then ${\rm codim}(\mc Z_1,\,\mc Q)\,\geqslant\, 3$.
\end{lemma}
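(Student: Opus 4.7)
The plan is to stratify $\mc Q$ by the isomorphism type of the torsion quotient $\mc B_q$. To a closed point $q\in\mc Q$ I associate its \emph{type} $\Lambda=(\lambda_p)_{p\in{\rm supp}(\mc B_q)}$, where $\lambda_p$ is the partition of $n_p:={\rm length}(\mc B_{q,p})$ giving the isomorphism class of the stalk over the discrete valuation ring $\mc O_{C,p}$,
$$\mc B_{q,p}\,\cong\,\bigoplus_i \mc O_{C,p}/\mf m_p^{\lambda_{p,i}}\,.$$
These types partition $\mc Q$ into finitely many locally closed subvarieties $\mc Q_\Lambda$, and by the very definition $\mc Q^0$ is the union of those $\mc Q_\Lambda$ for which every $\lambda_p$ has length $1$ (equivalently, every stalk $\mc B_{q,p}$ is cyclic over $\mc O_{C,p}$, equivalently $\mc B_q\cong\mc O_D$ where $D=\phi(q)$).

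To compute $\dim\mc Q_\Lambda$, observe that $\phi$ maps $\mc Q_\Lambda$ onto the locally closed subset of $C^{(d)}$ of divisors with the multiplicity profile determined by $\Lambda$, which has dimension $|{\rm supp}(\Lambda)|$, and the fiber over $D=\sum_p n_p\,p$ is the product $\prod_p {\rm Quot}_{p,\lambda_p}$ of strata in the local Quot schemes ${\rm Quot}(\mc O_{C,p}^{\oplus r},\,n_p)$. I would realise each stratum ${\rm Quot}_{p,\lambda}$ as the geometric quotient of the open subset of surjections inside ${\rm Hom}_{\mc O_{C,p}}(\mc O_{C,p}^{\oplus r},T)\cong T^{\oplus r}$ (of $\C$-dimension $rn$) by the free postcomposition action of ${\rm Aut}_{\mc O_{C,p}}(T)$, the latter being an open subset of the endomorphism ring of $\C$-dimension
$$\dim_\C{\rm End}_{\mc O_{C,p}}(T)\,=\,\sum_{i,j}\min(\lambda_i,\lambda_j)\,,$$
which uses $\dim_\C{\rm Hom}(\mc O_{C,p}/\mf m_p^a,\mc O_{C,p}/\mf m_p^b)=\min(a,b)$. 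Hence $\dim{\rm Quot}_{p,\lambda}=rn-\sum_{i,j}\min(\lambda_i,\lambda_j)$, and comparing with $\dim\mc Q=rd$ gives
$${\rm codim}(\mc Q_\Lambda,\,\mc Q)\,=\,\sum_p\Big(\sum_{i,j}\min(\lambda_{p,i},\lambda_{p,j})-1\Big)\,.$$

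For any partition $\lambda$ of length $\ell\geqslant 2$, all summands $\min(\lambda_i,\lambda_j)\geqslant 1$ give $\sum_{i,j}\min(\lambda_i,\lambda_j)\geqslant \ell^2\geqslant 4$, so each point $p$ with $\lambda_p$ of length $\geqslant 2$ contributes at least $3$ to the codimension; the remaining cyclic contributions are $n_p-1\geqslant 0$. Thus every $\mc Q_\Lambda\subset\mc Z_1$ has codimension at least $3$, and since $\mc Z_1$ is the finite union of (closures of) such strata, ${\rm codim}(\mc Z_1,\mc Q)\geqslant 3$ follows. The only delicate ingredient is the dimension formula for the local Quot strata; the final combinatorial check $\ell^2-1\geqslant 3$ for $\ell\geqslant 2$ is immediate once the formula is in hand.
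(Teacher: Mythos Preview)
Your argument is correct. Both your proof and the paper's use the Hilbert--Chow map $\phi$, but the decompositions differ. The paper argues in two coarse steps: first, $\phi(\mc Z_1)$ lies in the big diagonal $\Delta\subset C^{(d)}$, which has codimension~$1$; second, for each $D\in\Delta$ the fiberwise complement $\mc Z_1\cap\mc Q_D$ has codimension at least~$2$ in $\mc Q_D$, a fact the paper imports from \cite[Corollary~5.6]{GS}. Adding the two codimensions gives~$\geqslant 3$. You instead refine this into a full stratification by the isomorphism type $\Lambda$ of the quotient and compute the exact codimension of every stratum via the orbit parametrisation ${\rm Surj}(\mc O^r,T)/{\rm Aut}(T)$; your formula ${\rm codim}(\mc Q_\Lambda,\mc Q)=\sum_p\bigl(\sum_{i,j}\min(\lambda_{p,i},\lambda_{p,j})-1\bigr)$ then yields $\geqslant 3$ by the elementary bound $\ell^2-1\geqslant 3$ at any non-cyclic point. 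Your route is longer but self-contained---it effectively reproves the cited fiberwise bound from \cite{GS}---and it delivers the precise codimension of each stratum rather than just the inequality. The one place you should tighten the write-up is the orbit-space step: state explicitly that the ${\rm Aut}(T)$-action on surjections is free (if $\alpha\circ\varphi=\varphi$ with $\varphi$ surjective then $\alpha={\rm id}$), so that the dimension of the quotient really is $rn-\dim_\C{\rm End}(T)$; and note that strata with $\ell(\lambda_p)>r$ are empty, which is harmless for the bound.
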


\begin{proof}
Denote by $$\Delta\,\subset\, C^{(d)}$$ the images of all points
$(c_1,\,c_2,\,\cdots,\,c_d)\,\in \,C^{d}$ for which $c_i\,=\,c_j$ 
for some $1\,\leqslant\, i\,<\,j\,\leqslant\, d$.
For the Hilbert-Chow morphism $\phi$ in \eqref{hc} it is evident that $\phi(\mc Z_1)\,\subset\, \Delta$.
For a point $D\,\in \,\Delta$, let $\mc Q_D$ denote the fiber of $\phi$ over $D$. 
It follows from \cite[Corollary 5.6]{GS} that the inequality
${\rm codim}(\mc Z_1\cap \mc Q_D,\,\mc Q_D)\,\geqslant\, 2$ holds.
In \cite{GS} the open subset $\mc Q_D^0$ is denoted by $V$; see 
just before \cite[Lemma 5.1]{GS}. As ${\rm codim}(\Delta,\,C^{(d)})\,=\,1$,
the lemma follows.
\end{proof}

\begin{lemma}\label{H^1 large open set}
Take $X$ to be smooth, and let $f\,:\,U\, \hookrightarrow\, X$ be an open immersion. Assume
that the inequality ${\rm codim}(X\setminus U,\,X)\,\geqslant\, 3$ holds. For a coherent sheaf 
$F$ on $X$, denote by $F_U$ its restriction to $U$. For a locally free sheaf $F$ on $X$,
the following two hold:
\begin{enumerate}
\item $R^1f_*(F_U)\,=\,0$, and

\item $H^1(X,\,F)\,\cong\, H^1(U,\,F_U)$. 
\end{enumerate}
\end{lemma}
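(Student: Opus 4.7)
The plan is to deduce both parts from the local cohomology long exact sequence together with the standard depth bound for locally free sheaves on a smooth variety. Write $Z := X\setminus U$ with its reduced scheme structure, so $Z$ is a closed subset of $X$ with $\mathrm{codim}(Z,X)\geqslant 3$.

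The key input is the vanishing
\[
\mathcal{H}^i_Z(F)\,=\,0 \qquad \text{for } i\,<\,\mathrm{codim}(Z,X),
\]
where $\mathcal{H}^i_Z$ denotes the sheaf of sections supported on $Z$. Since $X$ is smooth, $\mathcal{O}_X$ is Cohen--Macaulay, and because $F$ is locally free the depth of $F$ along any closed subset equals the codimension of that subset. The vanishing of $\mathcal{H}^i_Z(F)$ for $i$ below the depth is then the standard local cohomology criterion (see, for instance, Grothendieck's local duality or the Stacks project treatment of depth). Taking global sections we obtain $H^i_Z(X,F)\,=\,0$ for $i\leqslant 2$.

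For part (2), I would feed this into the long exact sequence of local cohomology
\[
\cdots\,\longrightarrow\, H^1_Z(X,F)\,\longrightarrow\, H^1(X,F)\,\longrightarrow\, H^1(U,F_U)\,\longrightarrow\, H^2_Z(X,F)\,\longrightarrow\,\cdots,
\]
whose outer terms vanish, giving the required isomorphism $H^1(X,F)\cong H^1(U,F_U)$.

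For part (1), the sheaf $R^1f_*(F_U)$ is zero on $U$ trivially, so it suffices to check that its stalks vanish at each $x\in Z$. The stalk is the colimit of $H^1(V\cap U,F_U)$ over affine open neighbourhoods $V\ni x$ in $X$. For each such affine $V$, coherence of $F$ gives $H^1(V,F)\,=\,0$, so the local cohomology exact sequence restricted to $V$ yields
\[
H^1(V\cap U,F_U)\,\cong\, H^2_{V\cap Z}(V,F).
\]
The same depth argument applied on $V$ (with $\mathrm{codim}(V\cap Z,V)\geqslant 3$) forces this $H^2$ to vanish, hence the stalk is zero. The main point to get right is the reduction of the vanishing of $R^1f_*$ to a local cohomology computation on affine opens and the invocation of the depth $\geqslant$ codimension statement for locally free sheaves on a smooth variety; everything else is formal manipulation of exact sequences.
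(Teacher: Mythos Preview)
Your argument is correct and follows essentially the same route as the paper: both reduce to the vanishing of local cohomology in degrees $\leqslant 2$ via the depth $=$ codimension identity for locally free sheaves on a Cohen--Macaulay (here, smooth) scheme. The only organizational difference is that the paper first proves (1) by localizing at a point of $Z$ (working on $\mathrm{Spec}\,\mathcal{O}_{X,p}$ rather than on affine neighborhoods) and then deduces (2) from (1) via the Leray spectral sequence, whereas you establish (2) directly from the global local cohomology long exact sequence; both routes amount to the same computation.
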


\begin{proof}
Define $Z\,:=\,X\setminus U$, and let $p\,\in \,Z$ be a closed point. 
Let $X_p\,:=\,{\rm Spec}(\mc O_{X,p})$, $U_p\,:=\,U\bigcap X_p$ and $Z_p\,:=\,X_p\setminus U_p$ 
with the reduced scheme structure. To prove the first assertion, 
it suffices to show that $$R^1f_*(F_U)\big\vert_{X_p}\,=\,H^1(U_p,\,\mc O_{U_p})\,=\,0.$$ 
For this, using \cite[Chapter 3, Ex 2.3]{Ha},
it is enough to show that $H^i_{Z_p}(X_p,\,\mc O_{X_p})\,=\,0$ for $i\,=\,1,\,2$.
Now this follows using \cite[Chapter 3, Ex 3.4]{Ha} and
\cite[Definition 1.2.6, Theorem 2.1.2(b)]{Bruns-Herzog}.
	
The second assertion follows from the first one and the Leray spectral sequence.
\end{proof}

\subsection{A commutative square}

In this subsection we want to establish the 
commutativity (up to sign) 
of the diagram in \eqref{main deformation diagram}. Consider the following
square which, a priori, need not be commutative:
\begin{equation}\label{main deformation diagram}
\begin{tikzcd}
\mc D_{\bq_1^*E}(\C[\e])\ar[r]\ar[d, "\alpha", labels=left]
\arrow[rrr, bend left=10, "\gamma", labels=above right]& 
{\bf Def}_{(C,\bq_1^*E)}(\C[\e])\ar[r]&
{\bf Def}_{\mc Q'}(\C[\e])\ar[r] & {\bf Def}_{\mc Q^0}(\C[\e])\ar[d, "\delta"]\\
H^1(\Sigma,\,ad(\bq_1^*E))\ar[rrr, "\beta", labels=below]&&&
H^1(\mc Q^0,\,T_{\mc Q^0})
\end{tikzcd}
\end{equation}
The two vertical arrows in \eqref{main deformation diagram} have the usual description 
using cocycles. The map $\gamma$ is the composition of the maps 
which arise from the natural transformations defined above. 
The map $\beta$ in \eqref{main deformation diagram} is the following composition of maps: 
\begin{align}\label{seq of maps}
H^1(\Sigma,\,ad(\bq_1^*E))&\,\stackrel{\Xi}{\longrightarrow}\, H^1(\Sigma,\, \Phi_*\ms Hom(\mc A,\mc B))\\
&\stackrel{\sim}{\longrightarrow}\, H^1(\mc Z,\,\ms Hom(\mc A,\mc B))\nonumber\\
&\stackrel{\sim}{\longrightarrow}\, H^1(\mc Q,\,T_{\mc Q})\nonumber\\
&\stackrel{\sim}{\longrightarrow}\, H^1(\mc Q^0,\,T_{\mc Q^0}).\nonumber
\end{align}
The map $\Xi$ in \eqref{seq of maps} is the one in Theorem \ref{preliminiary ses}. In \eqref{seq of maps},
the second map is an isomorphism, see Corollary \ref{corollary main theorems}. The third isomorphism follows 
using Proposition \ref{T_Q} and the fact that
$\mc Z\,\longrightarrow \,\mc Q$ (see \eqref{dpi}) is a finite map.
The last isomorphism in \eqref{seq of maps} follows 
using Lemma \ref{codim Q^0 Q} and Lemma \ref{H^1 large open set}.

\begin{proposition}\label{prop-s}
The diagram \eqref{main deformation diagram} is commutative up
to a minus sign. 
\end{proposition}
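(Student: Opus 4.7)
The plan is to trace an arbitrary class $v\in \mc D_{\bq_1^*E}(\C[\e])$ around both routes of the square in \eqref{main deformation diagram} at the level of \v{C}ech cocycles, and verify that the two resulting elements of $H^1(\mc Q^0,\,T_{\mc Q^0})$ agree up to a sign. Since $v$ lies in the kernel of the forgetful transformation ${\bf Def}_{(C,\bq_1^*E)}\longrightarrow {\bf Def}_C$, I may take $\t\Sigma=\Sigma[\e]$ and $\mc C^{(d)}=C^{(d)}[\e]$ throughout. Fix a suitable affine cover $\{\t V_i\}$ of $\Sigma$ --- say the preimages of an affine cover of $C^{(d)}$ under $\bq_2$, which are automatically affine because $\bq_2$ is finite --- on which a chosen representative $\mc E$ on $\t\Sigma$ of $v$ can be trivialised. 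Then $v$ is represented by a \v{C}ech $1$-cocycle $\{g_{ij}\}$ with $g_{ij}\in \Gamma(\t V_{ij},\,\ms End(\bq_1^*E))$ such that $\mc E$ is glued via $\mathrm{id}+\e g_{ij}$, and the image of $\{g_{ij}\}$ in $H^1(\Sigma,\,ad(\bq_1^*E))$ equals $\alpha(v)$.

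For the route through $\gamma$ and $\delta$, the base-change identity used in the proof of the flatness lemma identifies the relative Quot scheme ${\rm Quot}_{\t\Sigma/\mc C^{(d)}}(\mc E,\,d)$ locally over an affine $\mc W\subset \mc C^{(d)}$ with the trivial family ${\rm Quot}_{\t V/V}(\bq_1^*E|_{\t V},\,d)\times \mathrm{Spec}\,\C[\e]$, and the transition between two such charts carries a quotient $q:\bq_1^*E|_D\twoheadrightarrow T_q$ to $q\circ (\mathrm{id}+\e g_{ij}|_D)^{-1}$. Restricting along the open immersion $j_0$ (Lemma \ref{lem-1}) and identifying $T_{\mc Q^0,q}$ with $\mathrm{Hom}(\mc A_q,\,\mc B_q)$ via Proposition \ref{T_Q}, the Kodaira--Spencer cocycle of this deformation of $\mc Q^0$ takes the value $-q\circ g_{ij}|_{\mc A_q}\in \mathrm{Hom}(\mc A_q,\,\mc B_q)$ on the overlap indexed by $(i,j)$; the minus sign enters from inverting the gluing. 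This computes $\delta\circ\gamma(v)$.

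For the route through $\alpha$ and $\beta$, inspection of the diagram \eqref{preliminary ses e3} shows that $\Xi$ sends $\{g_{ij}\}$ to the \v{C}ech cocycle in $H^1(\Sigma,\,\Phi_*\ms Hom(\mc A,\,\mc B))$ whose value on $\t V_{ij}$ is the composition $\mc A\hookrightarrow p_1^*E\xrightarrow{\,\tilde g_{ij}\,} p_1^*E\twoheadrightarrow \mc B$ for any lift $\tilde g_{ij}$ of $g_{ij}$ to $\ms End(p_1^*E|_{\mc Z})$; on fibres, this is exactly $+q\circ g_{ij}|_{\mc A_q}$. The remaining isomorphisms in \eqref{seq of maps} --- namely $H^1(\Sigma,\Phi_*\ms Hom(\mc A,\mc B))\cong H^1(\mc Z,\ms Hom(\mc A,\mc B))$ from Corollary \ref{corollary main theorems}, the identification with $H^1(\mc Q,\,T_{\mc Q})$ coming from Proposition \ref{T_Q} and the finiteness of $\overline p_2$, and restriction to $\mc Q^0$ via Lemma \ref{H^1 large open set} --- are all induced by natural maps of sheaves and do not alter the fibrewise formula. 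Therefore $\beta\circ\alpha(v)$ is represented by $+q\circ g_{ij}|_{\mc A_q}$, which differs from $\delta\circ\gamma(v)$ by exactly the sign demanded by the proposition.

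The main obstacle is the precise sign bookkeeping. One must verify that the inversion $(\mathrm{id}+\e g_{ij})^{-1}=\mathrm{id}-\e g_{ij}$, required to pass from the gluing of the bundle $\mc E$ to the gluing of Quot-scheme data, accounts for exactly one sign, and that the canonical identifications in \eqref{seq of maps} together with the splitting implicit in the description of $\Xi$ via the top row of \eqref{preliminary ses e2} contribute no further signs. Once these local identifications are carefully reconciled, the commutativity up to a minus sign of \eqref{main deformation diagram} follows.
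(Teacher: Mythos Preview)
Your proposal is correct and follows essentially the same approach as the paper's own proof: both fix an affine cover of $C^{(d)}$, pull it back to $\Sigma$ and $\mc Q^0$, represent the deformation $\mc E$ by \v{C}ech cocycles $\mathrm{id}+\e g_{ij}$, and then trace these cocycles around both sides of the square. The paper carries out the sign computation you flag in your final paragraph via an explicit $2\times 2$ matrix calculation for the induced inclusion $l'$ of the kernel in the deformed quotient (obtaining the $-\Phi^*\psi_{ij}$ entry from $(\Phi^*\t\psi_{ij})^{-1}$), which is exactly the inversion you identify as the source of the sign; your fibrewise description $\pm q\circ g_{ij}\big\vert_{\mc A_q}$ is the pointwise shadow of the paper's global formula $\pm u\circ\Phi^*\psi_{ij}\circ l$.
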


\begin{proof}
We will start with a cocycle description 
of any element of $\mc D_{\bq_1^*E}(\C[\e])$ and show that the 
resulting element in $H^1(\mc Q^0,\,T_{\mc Q^0})$ is independent of whether
we take the path $\beta\circ \alpha$ or $\delta\circ \gamma$.
Recall the following commutative diagram from \eqref{dpi} (see Section \ref{se9.1} 
for the definition of the maps):
$$
	\begin{tikzcd}
		C\ar[d,equal] & \mc Z\ar[r]\ar[d]\arrow[rr, bend left=15, "\overline{p}_2", labels=above right]
		\arrow[l, "\overline{p}_1", labels=above] & 
		C\times \mc Q\ar[r, "p_2", labels=below]\ar[d, "\Phi"] & \mc Q\ar[d, "\phi"] \\
		C & \Sigma \ar[r]\arrow[rr, bend right=15, "\overline{q}_2", labels=below right]
			\arrow[l, "\overline{q}_1", labels=above] & 
C\times C^{(d)}\ar[r, "q_2", labels=above] & C^{(d)}
\end{tikzcd}
$$

Let 
\begin{equation}\label{affine open cover}
C^{(d)}\,=\,\bigcup_{i=1}^nU_i
\end{equation}
be an affine open cover for $C^{(d)}$. Set
\begin{equation}\label{o1}
\Sigma_i\,:=\,\bq_2^{-1}(U_i),\ \ \mc Q_{i}\,:=\,\phi^{-1}(U_i)\ \ \text{ and }\ \ 
\mc Z_i\,:=\,\overline{p}_2^{-1}(\mc Q_i)\, ;
\end{equation}
see \eqref{hc}, \eqref{h1} and \eqref{dpi} for $\phi$, $\bq_2$ and $\overline{p}_2$
respectively. Let $U_{ij}\,=\,U_i\cap U_j$;
similarly define the open subsets 
\begin{equation}\label{o2}
\Sigma_{ij}\,:=\,\bq_2^{-1}(U_{ij}),\ \ \mc Q_{ij}\,:=\,\phi^{-1}(U_{ij})\ \ \text{ and }\ \ 
\mc Z_{ij}\,:=\,\overline{p}_2^{-1}(\mc Q_{ij}).
\end{equation}

An element of $\mc D_{\bq_1^*E}(\C[\e])$ is a locally free sheaf $\mc E$
on $\Sigma[\e]$ together with an isomorphism 
$$\theta\,:\,\bq_1^*E\,\longrightarrow\, \mc E\big\vert_\Sigma\,.$$
As the deformation is trivial on $\Sigma_i[\e]$, it follows that 
there is an isomorphism 
$$\varphi_i\,\,:\,\,\left(\bq_1^*E\big\vert_{\Sigma_i}\right)[\e]
\,\longrightarrow\, \mc E\big\vert_{\Sigma_i[\e]}$$
such that the following diagram, in which the lower row is obtained
by pulling back $\varphi_i$ along the closed immersion $\Sigma_i\longrightarrow \Sigma_i[\e]$,
\begin{equation}\label{proof diagram commutes e1}
\begin{tikzcd}
\bq_1^*E\big\vert_{\Sigma_i} \ar[r, "\theta\vert_{\Sigma_i}"]\ar[d,equal] & 
\mc E\big\vert_{\Sigma_i}\ar[d,equal]\\
\bq_1^*E\big\vert_{\Sigma_i} \arrow[r, "\varphi_i\vert_{\Sigma_i}"] & \mc E\big\vert_{\Sigma_i}
\end{tikzcd}
\end{equation}
commutes. Consider the following composite map
\[
\left(\bq_1^*E\big\vert_{\Sigma_{ij}}\right)[\e]\,\, \xrightarrow{\,\,\varphi_{i}\vert_{\Sigma_{ij}}\,\,}
\,\, \mc E\big\vert_{\Sigma_{ij}[\e]}\,\,\xrightarrow{\,\,\varphi_{j}^{-1}\vert_{\Sigma_{ij}}\,\,}
\,\, \left(\bq_1^*E\big\vert_{\Sigma_{ij}}\right)[\e]
\]
(see \eqref{o2}).
In view of \eqref{proof diagram commutes e1} it follows that the 
restriction of this map to $\Sigma_{ij}$ is the identity.
Thus, we may write
\begin{equation}\label{tilde psi}
\t\psi_{ij}\,:=\,\varphi_{j}^{-1}\big\vert_{\Sigma_{ij}}\circ \varphi_{i}\vert_{\Sigma_{ij}}
\,=\,{\rm Id}+\e \psi_{ij},
	\end{equation} 
where $\psi_{ij}\,\in\, \Gamma(\Sigma_{ij},\,\ms End(\bq_1^*E))$.
These $\{\psi_{ij}\}_{ij}$ define a cohomology class in 
$$[\mc E]\,\in\, H^1(\Sigma, \,\ms End(\bq_1^*E))\,.$$
The image of this class $[\mc E]$ in $H^1(\Sigma,\, ad(\bq_1^*E))$
is precisely the image of the deformation $\mc E$
under the map $\alpha$ in diagram \eqref{main deformation diagram}.
	
Consider the universal quotient \eqref{universal-seq-C times Q}
on $C\times \mc Q$. Restricting this to $\mc Z$, and using $\mc B$
is supported on $\mc Z$, we get a 
quotient which we denote by 
\begin{equation}
	u\,\,:\,\, \overline{p}_1^*E\,\longrightarrow\,\mc B\,.
\end{equation}
Recall the two commutative diagrams 
\eqref{preliminary ses e2} and \eqref{preliminary ses e3}.
Using these, together with the projection formula
and Corollary \ref{corollary main theorems},
it is easily checked that there is a commutative diagram
\[
\xymatrix{
\Gamma(\Sigma_{ij},\,\ms End(\bq_1^*E))\ar[r]\ar@{=}[d] & 
\Gamma(\Sigma_{ij},\, ad(\bq_1^*E))\ar@{=}[d]\ar[r]^<<<<{\Xi} & 
\Gamma(\Sigma_{ij},\, \Phi_*\ms Hom(\mc A,\mc B))\ar@{=}[d]\\
\Gamma(\mc Z_{ij},\,\ms End(\overline{p}_1^*E))\ar[r] & 
\Gamma(\mc Z_{ij},\,ad(\overline{p}_1^*E))\ar[r] &
\Gamma(\mc Z_{ij},\, \ms Hom(\mc A,\mc B))
}
\]
(see \eqref{o2}).
	
Consider $\Phi^*\psi_{ij}\,\in \,\Gamma(\mc Z_{ij},\,\ms End(\overline{p}_1^*E))$
and its image
\begin{equation}\label{et}
\Theta_{ij}\,\in\, \Gamma(\mc Z_{ij},\,\ms Hom(\mc A,\mc B))
\end{equation}
under the map $\ms End(\overline{p}_1^*E)\,\longrightarrow\, \ms Hom(\mc A,\,\mc B)$.
In order to avoid the notation from becoming too cumbersome, we
continue to denote the restriction of a sheaf $F$ to an open set $U$ by $F$,
instead of $F\vert_U$. Using \eqref{preliminary ses e2} one checks that the map $\Theta_{ij}$ is the composite
$$\mc A\,\longrightarrow\, \mc A\vert_{\mc Z}\,\longrightarrow\, 
\overline{p}_1^*E\, \xrightarrow{\,\,\Phi^*\psi_{ij}\,\,}\,
\overline{p}_1^*E\,\stackrel{u}{\longrightarrow}\, \mc B\, ,$$
where $\overline{p}_1$ is defined in \eqref{dpi}.
Let $\mc A'$ denote the following kernel of the map of sheaves $u$ on $\mc Z$
\begin{equation}\label{relative universal exact sequence}
0\,\longrightarrow\, \mc A'\,\stackrel{l}{\longrightarrow}\,
\overline{p}_1^*E\,\stackrel{u}{\longrightarrow} \,\mc B\,\longrightarrow\, 0\,.
\end{equation}
Restriction of the universal exact sequence \eqref{universal-seq-C times Q}
to $\mc Z$ produces the commutative diagram
\[
\xymatrix{
& \mc A\vert_{\mc Z}\ar[r]\ar[d] &p_1^*E\big\vert_{\mc Z} \ar[r]\ar@{=}[d]& \mc B\ar[r]\ar@{=}[d] &0\\
0\ar[r] & \mc A'\ar[r]^l &\overline{p}_1^*E \ar[r]^u& \mc B\ar[r] &0
	}
	\]
(the map $p_1$ is defined in \eqref{t5}).
It follows that the map $\Theta_{ij}$ in \eqref{et} factors uniquely 
through $\Theta^0_{ij}\,\in\, \Gamma(\mc Z_{ij},\,\ms Hom(\mc A',\mc B))$
\begin{equation}\label{define Theta^0}
\begin{tikzcd}
\mc A\ar[r]\arrow[rrrrr, bend left=15, "\Theta_{ij}", labels=above right]& 
\mc A\vert_{\mc Z}\ar[r] & \mc A' \ar[r,"l"]
\arrow[rrr, bend right=15, "\Theta^0_{ij}", labels=below right]&
\overline{p}_1^*E\arrow[r,"\Phi^*\psi_{ij}"] &
\overline{p}_1^*E\ar[r,"u"] & \mc B\,.
\end{tikzcd}
\end{equation}

We now summarize what has been done so far. Given a deformation 
$\mc E\,\in \,\mc D_{\bq_1^*E}(\C[\e])$ we have defined, using cocycles, 
a cohomology class $[\mc E]\,\in\, H^1(\Sigma,\, \ms End(\bq_1^*E))$. 
The class $\beta\circ\alpha(\mc E)$ 
in diagram \eqref{main deformation diagram} is the image of $[\mc E]$ 
along the maps 
\[
\begin{array}{ccccc}
\stackrel{\{\psi_{ij}\}}{\rotatebox[origin=c]{-90}{$\in$}} & 
\stackrel{\{\Theta^0_{ij}\}}{\rotatebox[origin=c]{-90}{$\in$}} & 
\stackrel{\{\Theta_{ij}\}}{\rotatebox[origin=c]{-90}{$\in$}}\\
H^1(\Sigma,\, \ms End(\bq_1^*E))\,\longrightarrow & H^1(\mc Z,\,\ms Hom(\mc A',\mc B)) 
\,\longrightarrow & H^1(\mc Z,\,\ms Hom(\mc A,\,\mc B))\\ 
& &\\
& & \qquad \longrightarrow H^1(\mc Q^0,\,T_{\mc Q^0})
\end{array}
\]
The last map is the composite of the last two maps in \eqref{seq of maps}.

Next we analyze the map $\delta\circ \gamma$ in 
\eqref{main deformation diagram}. Recall the open covers
from \eqref{affine open cover} and \eqref{o1}.
The deformation $\mc E$ defines the deformation 
${\rm Quot}_{\Sigma[\e]/(C^{(d)}[\e])}(\mc E,\,d)$
of $\mc Q'$; see \eqref{eta}, \eqref{eta-define}. 
This in turn defines a deformation of every open
subset of $\mc Q'$, and in particular, a deformation of 
$\mc Q^0$. This is precisely the deformation corresponding
to the element $\gamma(\mc E)\,\in\,{\bf Def}_{\mc Q^0}(\C[\e])$. 
We denote this open sub-scheme by 
\begin{equation}
	\t{\mc Q}^0\,\subset\, {\rm Quot}_{\Sigma[\e]/(C^{(d)}[\e])}(\mc E,\,d)\,.
\end{equation}
The underlying set of points of $\t{\mc Q}^0$ 
is the same as those of $\mc Q^0$.
The space $\t{\mc Q}^0$ defines a cocycle as follows.
Set
$$\mc Q^0_i\,:=\,\mc Q_i\cap \mc Q^0\,\ \ \text{ and }\,\ \ \mc Q^0_{ij}\,:=\,\mc Q_{ij}\cap \mc Q^0\, ,$$
where $\mc Q_i$ and $\mc Q_{ij}$ are defined in \eqref{o1} and \eqref{o2} respectively. 
Using the base change property, and the fact that $\mc E$ restricted to 
$U_i[\e]$ is trivial, it follows immediately that 
$$\t{\mc Q}^0\vert_{U_i[\e]}\,=\,\mc Q^0_i[\e]\,.$$
The isomorphism $\t\psi_{ij}$ (see \eqref{tilde psi}) defines an isomorphism 
\begin{equation}\label{o3}
\Psi_{ij}\,\,:\,\,\mc Q^0_{ij}[\e]\,\longrightarrow\,\mc Q^0_{ij}[\e]\, ,
\end{equation}
which gives rise to an element of $\Gamma(\mc Q^0_{ij},\, T_{\mc Q^0})$.
We will compute this element of $\Gamma(\mc Q^0_{ij},\, T_{\mc Q^0})$.

Define $\mc Z^0\,:=\, \Sigma\times_{C^{(d)}}\mc Q^0$; 
recall that this is an open subset of both $\mc Z\,=\,\Sigma\times_{C^{(d)}}\mc Q$
and $\Sigma\times_{C^{(d)}}\mc Q'$.
Observe that the restriction of \eqref{relative universal exact sequence}
to $\mc Z^0$ is the restriction, to $\Sigma\times_{C^{(d)}}\mc Q^0$, of the universal
quotient on $\Sigma\times_{C^{(d)}}\mc Q'$.
As before, we have open subsets $\mc Z^0_{i}\,:=\, \mc Z^0
\bigcap \mc Z_{i}$ and $\mc Z^0_{ij}\,:=\, \mc Z^0 \bigcap \mc Z_{ij}$ of $\mc Z^0$
(see \eqref{o1} and \eqref{o2}).
The isomorphism $\Psi_{ij}$ in \eqref{o3} is defined by the lower
row in the following commutative diagram of sheaves on $Z^0_{ij}[\e]$
	\[
	\begin{tikzcd}
	0\ar[r] & \mc A'[\e]\ar[r, "l\lbrack\e\rbrack"]\ar[d,equal] & 
	\overline{p}_1^*E[\e]\arrow[rr,"u\lbrack\e\rbrack"] & &
		\mc B[\e]\ar[r]\ar[d,equal] &0\,\phantom{.}\\
		0\ar[r] & \mc A'[\e]\arrow[r,"l'"] & 
		\overline{p}_1^*E[\e]\arrow[rr, "u\lbrack\e\rbrack\circ \Phi^*\t\psi_{ij}"]
			\arrow[u, "\Phi^*\t\psi_{ij}"] & &
		\mc B[\e]\arrow[r] &0\,.
	\end{tikzcd}
	\]
	Let $a,\,b\,\in\,\overline{p}_1^*E$. 
	If we write an element 
	$$a\oplus b\otimes \e\,\in\, \overline{p}_1^*E[\e]\,=\,\overline{p}_1^*E \oplus
\overline{p}_1^*E\otimes \e$$ 
	as the column vector 
	$[a,\,b\otimes\e]^t$, then the matrix of $\Phi^*\t\psi_{ij}$ has the expression
	\[
	\left(\begin{array}{c c}
	{\rm Id} & 0\\
		\Phi^*\psi_{ij} & {\rm Id}
	\end{array}\right)\,.
	\]
	This shows that the map $l'$ is equal to 
	\[
	\left(\begin{array}{c c}
		l & 0\\
		-\Phi^*\psi_{ij} & l
	\end{array}\right)\,.
	\]
This gives us the section 
$$-u\circ \Phi^*\psi_{ij}\circ l\,\in \,\Gamma(\mc Z^0_{ij},\, \ms Hom(\mc A',\,\mc B))\,.$$
The images of these in $\Gamma(\mc Q^0_{ij},\,T_{\mc Q^0})$ along the map 
$$\Gamma(\mc Z^0_{ij}, \,\ms Hom(\mc A',\,\mc B))\,\longrightarrow \,\Gamma(\mc Z^0_{ij},
\, \ms Hom(\mc A,\,\mc B))
\,\longrightarrow \,\Gamma(\mc Q^0_{ij}, \,T_{\mc Q^0})$$
define the required cocycle
which represents the class $\delta\circ\gamma(\mc E)$
(see \eqref{main deformation diagram} for $\delta$ and $\gamma$). It is evident using 
\eqref{define Theta^0} that
$$-u\circ \Phi^*\psi_{ij}\circ l\,=\,-\Theta^0_{ij}\,.$$
This completes the proof of the proposition.
\end{proof}

Recall that $g_C$ denotes the genus of the curve $C$.

\begin{theorem}\label{th}
Let $d,\,g_C\,\geqslant\, 2$.
The composite map $${\bf Def}_{(C,\bq_1^*E)}(\C[\e])\,\longrightarrow\,
{\bf Def}_{\mc Q^0}(\C[\e])\,\stackrel{\delta}{\longrightarrow}\,
H^1(\mc Q^0,\,T_{\mc Q^0})$$ in \eqref{main deformation diagram}
is surjective.
\end{theorem}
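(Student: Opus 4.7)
The plan is to deduce surjectivity from a five lemma argument applied to the commutative diagram
\[
\begin{tikzcd}[column sep=small]
0 \ar[r] & \mc D_{\bq_1^*E}(\C[\e]) \ar[r] \ar[d, "\sigma_1"] & {\bf Def}_{(C,\bq_1^*E)}(\C[\e]) \ar[r] \ar[d, "\sigma_2"] & {\bf Def}_C(\C[\e]) \ar[r] \ar[d, "\cong"] & 0 \\
0 \ar[r] & H^1(\Sigma,\,q_1^*ad(E)|_\Sigma) \ar[r] & H^1(\mc Q^0,\,T_{\mc Q^0}) \ar[r] & H^1(C,\,T_C) \ar[r] & 0
\end{tikzcd}
\]
whose top row is Lemma \ref{ses deformation functors}, whose bottom row is Theorem \ref{cor-cohomology of T_Q-1} (transferred to $\mc Q^0$ via Lemmas \ref{codim Q^0 Q} and \ref{H^1 large open set}), whose right vertical arrow is the Kodaira-Spencer isomorphism for $C$, and whose middle vertical arrow $\sigma_2\,=\,\delta\circ\gamma$ is the map we want to prove surjective, with $\sigma_1$ its restriction to $\mc D_{\bq_1^*E}(\C[\e])$. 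Once the two squares commute (up to signs, which is irrelevant for surjectivity) and $\sigma_1$ is surjective, the conclusion follows by a standard chase: for any $\omega\,\in\, H^1(\mc Q^0, T_{\mc Q^0})$, lift its image in $H^1(C, T_C)$ to a class in ${\bf Def}_{(C,\bq_1^*E)}(\C[\e])$ via the right vertical, then correct the remaining difference in $H^1(\Sigma, q_1^*ad(E)|_\Sigma)$ using $\sigma_1$.

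The left square commutes up to a sign because Proposition \ref{prop-s} identifies $\sigma_1$ with $\pm\beta\circ\alpha$, provided one first identifies the image of $\beta$ with the subspace $H^1(\Sigma, q_1^*ad(E)|_\Sigma)\,\subset\, H^1(\mc Q^0, T_{\mc Q^0})$ appearing in the bottom row. This identification is immediate from the long exact sequence attached to the short exact sequence in Corollary \ref{corollary main theorems}: the vanishing $H^0(\Sigma, q_1^*T_C|_\Sigma)\,=\,0$ (by K\"unneth using $\Sigma\,\cong\, C\times C^{(d-1)}$ and $H^0(C, T_C)\,=\,0$, since $g_C\,\geqslant\, 2$) makes the connecting map $H^1(\Sigma, ad(\bq_1^*E))\,\hookrightarrow\, H^1(\Sigma, \Phi_*\ms Hom(\mc A,\mc B))$ injective with image the kernel of the further projection to $H^1(\Sigma, q_1^*T_C|_\Sigma)\,\cong\, H^1(C, T_C)$. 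Surjectivity of $\sigma_1$ then follows from surjectivity of $\alpha$: in characteristic zero the trace gives a splitting $\ms End(\bq_1^*E)\,\cong\, \mc O_\Sigma \oplus ad(\bq_1^*E)$, so $H^1(\Sigma, \ms End(\bq_1^*E)) \twoheadrightarrow H^1(\Sigma, ad(\bq_1^*E))$ is surjective, while $\mc D_{\bq_1^*E}(\C[\e]) \,\cong\, H^1(\Sigma, \ms End(\bq_1^*E))$ by standard Kodaira-Spencer theory.

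The main obstacle I expect is commutativity of the right square; it is a naturality statement asserting that starting from a quadruple $(\xi,\,i,\,\mc E,\,\theta)$ with underlying deformation $\xi\,:\,\mc C\,\longrightarrow\,{\rm Spec}\,\C[\e]$ of $C$, the Kodaira-Spencer class of the induced deformation $\widetilde{\mc Q}^0$ projects to $\mathrm{KS}(\xi)$ under the map $H^1(\mc Q^0, T_{\mc Q^0}) \,\longrightarrow\, H^1(C, T_C)$ from Theorem \ref{cor-cohomology of T_Q-1}. I would verify this by a cocycle chase parallel to, but considerably simpler than, the one in Proposition \ref{prop-s}: trivialize $\xi$ over (the pullback to $\mc C$ of) the cover \eqref{affine open cover}, write the resulting transition automorphisms in the form $\mathrm{Id}+\e v_{ij}$ with $\{v_{ij}\}$ representing $\mathrm{KS}(\xi)$, track these transitions through the relative Quot scheme construction of Section \ref{section explicit description} to produce a cocycle in $H^1(\mc Q^0, T_{\mc Q^0})$, and finally check via Corollary \ref{corollary main theorems} that its image under $\Phi_*\ms Hom(\mc A,\mc B) \twoheadrightarrow q_1^*T_C|_\Sigma$ is $\pm\bq_1^* v_{ij}$, which represents $\mathrm{KS}(\xi)$ under the canonical $H^1(\Sigma, q_1^*T_C|_\Sigma) \,\cong\, H^1(C, T_C)$. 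The conceptual content is that the surjection $\Phi_*\ms Hom(\mc A,\mc B) \twoheadrightarrow q_1^*T_C|_\Sigma$ is essentially the derivative of the Hilbert-Chow morphism in the base direction, so a deformation of $C$ propagates undistorted through the construction.
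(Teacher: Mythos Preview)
Your diagram-chase strategy is sound, and the left square together with the surjectivity of $\sigma_1$ are handled exactly as in the paper. The divergence is entirely in the right square: you propose to show directly, by a cocycle computation, that the induced map $f\colon {\bf Def}_C(\C[\e])\to H^1(C,T_C)$ on cokernels is (up to sign) the Kodaira--Spencer isomorphism. The paper explicitly flags that ``it is not clear if $f$ itself is an isomorphism'' and takes a different route: it proves $f$ is injective (hence bijective by a dimension count) by showing $\ker(\delta_0)\subset \mc D_{\bq_1^*E}(\C[\e])$. The argument runs as follows. By Remark~\ref{factors through deformation of Pi} the construction factors through ${\bf Def}_{\mc Q^0\stackrel{\phi_0}{\to}C^{(d)}}$; since $\phi_{0*}\mc O_{\mc Q^0}=\mc O_{C^{(d)}}$ and $R^1\phi_{0*}\mc O_{\mc Q^0}=0$ (Corollary~\ref{cor-direct image of structure sheaf} with Lemma~\ref{H^1 large open set}), a deformation of $\phi_0$ inducing the trivial deformation of $\mc Q^0$ must itself be trivial by \cite[Lemma 0E3X]{Stk}; in particular the base deformation $\mc C^{(d)}$ is trivial, and then \cite[Lemma~4.3]{Kempf} forces $\mc C$ to be trivial. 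The Snake lemma finishes.

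Your proposed cocycle chase for the right square is plausible but not simpler than advertised, and as written it is only a sketch. The computation in Proposition~\ref{prop-s} treated the case where the deformation of $C$ is \emph{trivial}; a nontrivial $\xi$ introduces genuine bookkeeping in passing between covers of $C$ and covers of $C^{(d)}$ (your phrase ``the pullback to $\mc C$ of the cover \eqref{affine open cover}'' already needs clarification, since \eqref{affine open cover} covers $C^{(d)}$, not $C$). More substantively, your last step---reading off the image under $\Phi_*\ms Hom(\mc A,\mc B)\twoheadrightarrow q_1^*T_C\vert_\Sigma$---requires an explicit local description of that surjection, whereas in this paper it is defined only as the boundary map to $R^1\Phi_*\ms End(\mc A)$ followed by an identification established in \cite{BGS-deformations-1}. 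Your intuition that this surjection is the derivative of the Hilbert--Chow morphism in the base direction is morally correct, but making it precise is exactly the work the paper's rigidity argument circumvents.
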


\begin{proof}
Denote the composite map in the statement of the theorem by 
$\delta_0$. From Proposition \ref{prop-s} we conclude that the square in the following
diagram commutes up to a minus sign
\begin{equation}\label{deformation diagram ses}
\begin{tikzcd}
\mc D_{\bq_1^*E}(\C[\e])\ar[r,"\gamma_0"]\ar[d, "\alpha", labels=left]& 
{\bf Def}_{(C,\bq_1^*E)}(\C[\e])\ar[d, "\delta_0"]\ar[r] &{\bf Def}_C(\C[\e])\\
H^1(\Sigma,\,ad(\bq_1^*E))\ar[r, "\beta", labels=below]& H^1(\mc Q^0,\,T_{\mc Q^0})
\ar[r] & H^1(C,\,T_C)\,.
\end{tikzcd}
\end{equation}
The two rows in the above diagram are short exact sequences;
see Lemma \ref{ses deformation functors} and Theorem \ref{cor-cohomology of T_Q-1}.
It follows that there is an induced map
\begin{equation}\label{define f final theorem}
f\,\,:\,\,{\bf Def}_C(\C[\e])\,\longrightarrow\, H^1(C,\,T_C)\,
\end{equation}
which makes the right-side of \eqref{deformation diagram ses} a commutative square.
Although there is an isomorphism of vector spaces between 
the domain and codomain of $f$, it is not clear if $f$ itself
is an isomorphism. We want to show that $f$ is an isomorphism.

We will show that every element in the kernel of 
the above defined map $\delta_0$ is in $\mc D_{\bq_1^*E}(\C[\e])$.
Take an element in the kernel of $\delta_0$.
For ease of notation we denote it by a pair $(\mc C,\,\mc E)$.
Recall the definitions of the maps $\Pi$ and $j_0$ 
(see \eqref{define Pi}, \eqref{define j_0}). Let
$$\phi_0\,:\, \mc Q^0 \,\longrightarrow\, C^{(d)}$$
be the restriction of the Hilbert-Chow map $\phi$ in \eqref{hc}.
Observe that $\phi_0\,=\,\Pi\circ j_0$.
Using Remark \ref{factors through deformation of Pi} we conclude 
that there is a commutative diagram 
\[
\xymatrix{
{\bf Def}_{(C,\bq_1^*E)}\ar[r] & {\bf Def}_{\mc Q'\stackrel{\Pi}{\longrightarrow}C^{(d)}}
\ar[r]\ar[d] & {\bf Def}_{\mc Q'}\ar[d]\\
&{\bf Def}_{\mc Q^0\stackrel{\phi_0}{\longrightarrow}C^{(d)}}
\ar[r] & {\bf Def}_{\mc Q^0}
}
\]
Applying the Grothendieck spectral sequence to 
the composite $\phi_0\,=\, \phi\circ j_0$, and using Lemma \ref{H^1 large open set}
and Corollary \ref{cor-direct image of structure sheaf},
we conclude that $R^1\phi_{0*}(\mc O_{\mc Q^0})\,=\,0$ and 
$\phi_{0*}\mc O_{\mc Q^0}\,=\,\mc O_{C^{(d)}}$.
Using \cite[\href{https://stacks.math.columbia.edu/tag/0E3X}{Lemma 0E3X}]{Stk}
it follows that if a deformation of $\phi_0$ induces
the trivial deformation of $\mc Q^0$, then the deformation 
itself was the trivial one. In particular, the deformation 
of the base is also the trivial one, that is, $\mc C^{(d)}$
is the trivial deformation of $C^{(d)}$. Using \cite[Lemma 4.3]{Kempf}
it follows that $\mc C$
is the trivial deformation. Consequently, the pair $(\mc C,\mc E)$
is in $\mc D_{\bq_1^*E}(\C[\e])$.

Recall the map $f$ from \eqref{define f final theorem}.
As ${\rm kernel}(\alpha)\,=\,{\rm kernel}(\delta_0)$ and 
${\rm cokernel}(\alpha)\,=\,0$, using Snake lemma it follows 
that ${\rm kernel}(f)\,=\,0$. Considering the dimensions it is deduced
that $f$ is surjective. It now follows that $\delta_0$ is surjective. 
\end{proof}

Lemma \ref{codim Q^0 Q} and Lemma \ref{H^1 large open set}
show that there is an isomorphism $H^1(\mc Q,\,T_{\mc Q})\,
\stackrel{\sim}{\longrightarrow}\, H^1(\mc Q^0,\,T_{\mc Q^0})$.
In other words, an infinitesimal deformation of $\mc Q$ is completely 
determined by its restriction to $\mc Q^0$. 
Theorem \ref{th} shows that all first order deformations of $\mc Q$ arise 
from elements of $${\bf Def}_{(C,\bq_1^*E)}(\C[\e])$$ as described above.
Recall the following commutative diagram from \eqref{deformation diagram ses},
in which the two rows are short exact sequences. 
\begin{equation*}
	\begin{tikzcd}
		\mc D_{\bq_1^*E}(\C[\e])\ar[r,"\gamma_0"]\ar[d, "\alpha", labels=left]& 
		{\bf Def}_{(C,\bq_1^*E)}(\C[\e])\ar[d, "\delta_0"]\ar[r] &{\bf Def}_C(\C[\e])\ar[d, "f"]\\
		H^1(\Sigma,\,ad(\bq_1^*E))\ar[r, "\beta", labels=below]& H^1(\mc Q^0,\,T_{\mc Q^0})
		\ar[r] & H^1(C,\,T_C)\,.
	\end{tikzcd}
\end{equation*}
We proved that $f$ is an isomorphism. In particular, it follows that ${\rm Ker}(\delta_0)={\rm Ker}(\alpha)$. 
In view of the isomorphism $\mc D_{\bq_1^*E}(\C[\e])\,\stackrel{\sim}{\longrightarrow}
\,H^1(\Sigma,\,{\rm End}(\bq_1^*E))$, 
it follows that 
$${\rm Ker}(\delta_0)\,=\,{\rm Ker}(\alpha)\,=\,H^1(\Sigma,\,\mc O_\Sigma)
\,=\,H^1(C,\,\mc O_C)\oplus H^1(C^{(d-1)},\,\mc O_{C^{(d-1)}}).$$
For the last equality we have used the K\"unneth formula and the fact that $\Sigma\cong C\times C^{(d)}$.
Since we are assuming that $g_C\geqslant 2$, it follows that ${\rm Ker}(\delta_0)\neq0$.

Another way to see that we have a non-trivial kernel is to observe the following.
Note that elements of
$H^1(\Sigma,\mc O_{\Sigma})$ correspond to first order deformations of the line bundle $\mc O_{\Sigma}$ on $\Sigma$. Given an element $\alpha\in H^1(\Sigma,\mc O_{\Sigma})$ let us denote the corresponding line bundle on $\Sigma[\epsilon]$ by $L_{\alpha}$. It can be checked easily that under the inclusion
$$H^1(\Sigma, \mc O_{\Sigma})\subset H^1(\Sigma, \ms End (\overline{q}_1^*E))
\,=\, \mc D_{\overline{q}_1^*E}(\mb C[\epsilon])$$
the image of the element $\alpha$ corresponds to the first order deformation $L_{\alpha}\otimes
(\overline{q}_1^*E[\epsilon])$ of the bundle $\overline{q}_1^*E$. Now by definition (see \ref{eta}), under the map
\begin{equation}
\mc D_{\overline{q}_1^*E}(\mb C[\epsilon]) \subset \,{\bf Def}_{(C,\bq_1^*E)}(\mb C[\epsilon])\,\longrightarrow \, {\bf Def}_{\mc Q'}(\mb C[\epsilon])\,.
\end{equation}
where $\mc Q'\,=\,{\rm Quot}_{\Sigma/C^{(d)}}(\overline{q}_1^*E, d)$ this deformation
$L_{\alpha}\otimes (\overline{q}_1^*E[\epsilon])$ of the bundle $\overline{q}_1^*E$ maps to the deformation 
$${\rm Quot}_{\Sigma[\epsilon]/C[\epsilon]^{(d)}}(L_{\alpha}\otimes (\overline{q}_1^*E[\epsilon]), \, d)$$
of $\mc Q'$. But we have an isomorphism
$${\rm Quot}_{\Sigma[\epsilon]/C[\epsilon]^{(d)}}( (\overline{q}_1^*E[\epsilon]), \, d)
\cong
{\rm Quot}_{\Sigma[\epsilon]/C[\epsilon]^{(d)}}(L_{\alpha}\otimes (\overline{q}_1^*E[\epsilon]), \, d)$$
which is produced by twisting the universal quotient by the line bundle $L_{\alpha}$. From the base change property of Quot schemes it follows that
$${\rm Quot}_{\Sigma[\epsilon]/C[\epsilon]^{(d)}}( (\overline{q}_1^*E[\epsilon]), \, d)\cong \mc Q'[\epsilon]\,,$$
so it is the trivial deformation. Hence the element $\alpha$ induces the trivial deformation of $\mc Q$.

\section*{Acknowledgements}

We thank the referee for helpful comments to improve the exposition.

The first-named
author is partially supported by a J. C. Bose Fellowship (JBR/2023/000003).

\section*{Funding and Conflicts of interests/Competing interests}

The authors have no relevant financial or non-financial interests to disclose.
The authors have no competing interests to declare that are relevant to the content of this article.

\newcommand{\etalchar}[1]{$^{#1}$}

\end{document}